\newcommand{\cA}{\mathcal{A}}
\newcommand{\cC}{\mathcal{C}}
\newcommand{\D}{\mathcal{D}}
\newcommand{\cE}{\mathcal{E}}
\newcommand{\cG}{\mathcal{G}}
\newcommand{\cM}{\mathcal{M}}
\newcommand{\cN}{\mathcal{N}}
\newcommand{\LL}{\mathcal{L}}
\newcommand{\cO}{\mathcal{O}}
\newcommand{\bC}{\mathbb{C}}
\newcommand{\bN}{\mathbb{N}}
\newcommand{\bQ}{\mathbb{Q}}
\newcommand{\bP}{\mathbb{P}}
\newcommand{\bR}{\mathbb{R}}
\newcommand{\V}{\mathbb{V}}
\newcommand{\bZ}{\mathbb{Z}}
\newcommand{\RP}{\mathbb{RP}}
\newcommand{\Ad}{\mathrm{Ad}}
\newcommand{\Hom}{\mathrm{Hom}}
\newcommand{\Map}{\mathrm{Map}}
\newcommand{\Aut}{\mathrm{Aut}}
\newcommand{\End}{\mathrm{End}}
\newcommand{\rank}{\mathrm{rank}}
\newcommand{\diag}{\mathrm{diag}}
\newcommand{\Ker}{\mathrm{Ker}}
\newcommand{\fr}{\mathfrak{r}}
\newcommand{\fsu}{\mathfrak{su}}
\newcommand{\fm}{\mathfrak{m}}
\newcommand{\tSi}{ {\tilde{\Si}} }
\newcommand{\tM}{\tilde{M}}
\newcommand{\tP}{\tilde{P}}
\newcommand{\tg}{\tilde{g}}
\newcommand{\ba}{\bar{a}}
\newcommand{\bb}{\bar{b}}
\newcommand{\bc}{\bar{c}}
\newcommand{\bd}{\bar{d}}
\newcommand{\bV}{\bar{V}}
\newcommand{\Si}{ {\Sigma} }
\newcommand{\ep}{\epsilon}
\newcommand{\ab}{a_1,b_1,\ldots,a_\ell,b_\ell}
\newcommand{\pab}{\prod_{i=1}^\ell[a_i,b_i]}
\newcommand{\flU}[2]{ X_{\mathrm{flat}}^{ {#1}, {#2}}(U(n)) }
\newcommand{\ymU}[2]{ X_{\mathrm{YM} }^{{#1},{#2}}(U(n)) }
\newcommand{\ZymU}[1]{ Z_{\mathrm{YM} }^{\ell, {#1}}(U(n)) }
\newcommand{\ymS}[2]{ X_{\mathrm{YM} }^{{#1},{#2}}(U(1)) }
\newcommand{\ZymS}[1]{ Z_{\mathrm{YM} }^{\ell, {#1}}(U(1)) }
\newcommand{\kn}{{\frac{k}{n},\ldots,\frac{k}{n} }}
\newtheorem{thm}{Theorem}
\newtheorem{lm}[thm]{Lemma}
\newtheorem{rem}[thm]{Remark}
\newtheorem{cor}[thm]{Corollary}
\newtheorem{df}[thm]{Definition}
\newtheorem{con}[thm]{Conjecture}
\newtheorem{nota}[thm]{Notation}
\begin{document}

\parskip=0.35\baselineskip

\baselineskip=1.2\baselineskip

\title{Antiperfect Morse Stratification}

\author{Nan-Kuo Ho}
\address{Department of Mathematics, National Tsing Hua University, Hsinchu 300,  Taiwan}
\email{nankuo@math.nthu.edu.tw}

\author{Chiu-Chu Melissa Liu}
\address{Department of Mathematics,
Columbia University, New York, NY 10027, USA}
\email{ccliu@math.columbia.edu}

\date{September 1, 2009}



\begin{abstract}
For an equivariant Morse stratification which contains a unique open stratum,
we introduce the notion of equivariant antiperfection,
which means the difference of the equivariant
Morse series and the equivariant Poincar\'{e} series achieves the
maximal possible value (instead of the minimal possible value $0$ in the
equivariantly perfect case).
We also introduce a weaker condition of local equivariant antiperfection.
We prove that the Morse stratification of the Yang-Mills functional
on the space of connections on a principal $G$-bundle
over a connected, closed, nonorientable surface
$\Si$ is locally equivariantly $\bQ$-antiperfect
when $G=U(2), SU(2), U(3), SU(3)$; we propose
that the Morse stratification is actually equivariantly $\bQ$-antiperfect
in these cases. Our proposal yields formulas of
Poincar\'{e} series $P_t^G(\Hom(\pi_1(\Si),G);\bQ)$
when $G=U(2), SU(2), U(3), SU(3)$.
Our $U(2)$, $SU(2)$ formulas agree with formulas proved by
T. Baird, who also verified our conjectural $U(3)$ formula.
\end{abstract}

\maketitle

\tableofcontents

\section{Introduction}

Let $f$ be a Morse function on a compact manifold $M$, so
that it has finitely many isolated nondegenerate
critical points. The Morse polynomial of $f$
is defined to be
$$
M_t(f)=\sum_{p\in \mathrm{Crit}(f)} t^{\lambda_p}
$$
where $\mathrm{Crit}(f)$ is the set of critical points of $f$, and
$\lambda_p$ is the Morse index of $p$. The Morse polynomial of any
Morse function satisfies the Morse inequalities
$$
M_t(f) - P_t(M;K) = (1+t)R_t(K)
$$
where $P_t(M;K)$ is the Poincar\'{e} polynomial of $M$
relative to a coefficient field $K$,
and $R_t(K)$ is a polynomial with nonnegative
coefficients. A Morse function is called
$K$-perfect if $R_t(K)=0$.

In \cite{ym}, Atiyah and Bott  studied Morse theory in a much more general setting:
the manifold $M$ is an infinite dimensional
affine space $\cA$ of connections
on a principal $G$-bundle $P$ over a Riemann surface $\Si$,
where $G$ is a compact connected Lie group;
the functional $f$ is the Yang-Mills functional
$L:\cA\to \bR$, $A\mapsto  \| F_A\|_{L^2}$, which is Morse-Bott instead of
Morse\footnote{Indeed, $L$ is not Morse-Bott in the strict sense,
since its critical sets $\cN_\mu$ are singular in general,
but the Morse index $\lambda_\mu$ of $\cN_\mu$ is well-defined, and
$$
M_t^\cG(L;K)=\sum_{\lambda\in \Lambda} t^{\lambda_\mu}
P_t^{\cG}(\cN_\mu;K)= \sum_{\lambda\in \Lambda} t^{\lambda_\mu}
P_t^{\cG}(\cA_\mu;K)
$$
where $\cA_\mu$ is the stable  manifold of $\cN_\mu$.};
the Yang-Mills functional is invariant under the action
of the gauge group $\cG=\Aut(P)$, and Atiyah and Bott
consider the $\cG$-equivariant
Morse series $M_t^\cG(L;K)$ of $L$ and
$\cG$-equivariant Poincar\'{e} series
$P_t^\cG(\cA;K)$ of $\cA$
(they are infinite series instead of
polynomials). The Morse inequalities in
this context are encoded in the equation
\begin{equation}\label{eqn:inequalities}
M_t^{\cG}(L;K) - P_t^{\cG}(\cA;K) = (1+t) R_t^\cG(K)
\end{equation}
where $R_t^\cG(K)$ is a formal power series with nonnegative coefficients.
When $K=\bQ$, Atiyah and Bott computed
$P_t^\cG(\cA;\bQ)=P_t(B\cG;\bQ)$, where $B\cG$ is the classifying
space of the gauge group\footnote{Atiyah-Bott computed
$P_t(B\cG;\bQ)$ for $G=U(n)$ in \cite[Section 2]{ym}; their
computation can be generalized to any compact connected Lie group
$G$ \cite[Theorem 3.3]{LR}.}, and proved that the Morse
stratification of the Yang-Mills functional is equivariantly
$\bQ$-perfect, in the sense that $R_t^\cG(\bQ)=0$ (when $G=U(n)$,
they also proved that it is equivariantly $\bZ_p$-perfect for any
prime $p$). This leads to a recursive formula computing the
equivariant Poincar\'{e} series $P_t^\cG(\cA_{ss};\bQ)$ of the
unique open stratum $\cA_{ss}\subset \cA$. When the obstruction
class $o_2(P)\in H^2(\Si;\pi_1(G))\cong \pi_1(G)$ is torsion, the
absolute minimum of the Yang-Mills functional is zero, and the
unique open stratum $\cA_{ss}$ is the stable manifold of the space
$\cN_0$ of flat connections on $P$. We have
\begin{equation}
P_t^\cG(\cA_{ss};\bQ)= P_t^\cG(\cN_0;\bQ)
= P_t^G\left(\Hom(\pi_1(\Si),G)_P ;\bQ\right)
\end{equation}
where the subscript $P$ labels the connected
component corresponding to the topological
type $P$ (which is classified by
the obstruction class $o_2(P)$).

In \cite{HL1, HL2}, the authors generalized
some aspects of \cite{ym} to
connected, closed, {\em nonorientable} surfaces.
Let $\Si$ be a connected, closed nonorientable
surface, so that it is the connected
sum of $m>0$ copies of $\RP^2$. Let
$\pi:\tSi\to \Si$ be the orientable double cover,
so that $\tSi$ is a Riemann surface of genus $m-1$.
Let $\cA$ and $\tilde{\cA}$ denote
the spaces of connections on a principal
$G$-bundle $P\to\Si$ and on the pull back
$\pi^*P\to \tSi$, respectively. Then $A\to \pi^*A$
defines an inclusion $\cA\hookrightarrow \tilde{\cA}$
whose image is the fixed locus of an anti-holomorphic,
anti-symplectic involution $\tau$ on $\tilde{\cA}$, and the Yang-Mills functional
$L:\cA\to \bR$ is, by definition, the restriction of
the Yang-Mills functional on $\tilde{\cA}$ to $\cA$.
The absolute minimum of the Yang-Mills functional $L:\cA\to \bR$
is always zero, achieved by flat connections. The normal bundles of Morse strata of
$\cA$ defined by $L$ are
real vector bundles,  so a priori one can
only take $K=\bZ_2$. Together
with  Ramras, the authors proved that these bundles,
and their associated homotopy orbit bundles, are
orientable when $G=U(n)$ or $SU(n)$ \cite{HLR},
so we may use any field coefficient in this case.
When $G=U(n)$ or $SU(n)$, the Morse stratification of $\cA$
defined by $L$ is
not equivariantly $\bQ$-perfect.

In this paper, we introduce the notion of equivariant
$K$-antiperfection,  which means the discrepancy $R_t^\cG(K)$
in \eqref{eqn:inequalities} achieves
the maximal possible value (instead of
the minimal possible value $0$ in the
perfect case).
We also introduce a weaker condition of local
equivariant $K$-antiperfection. We prove
that the Morse stratification
defined by the Yang-Mills functional on
the space of connections on a principal
$G$-bundle over a connected, closed,
nonorientable surface $\Si$
is locally equivariantly $\bQ$-antiperfect
when $G=U(2), SU(2), U(3), SU(3)$; we propose
that it is actually equivariantly $\bQ$-antiperfect
in these cases. (When $G=U(1)$, there is
only one stratum $\cA_{ss}=\cA$.) Our proposal
yields formulas for the following equivariant Poincar\'{e} series when $n=2,3$:
\begin{eqnarray*}
&& P_t^{U(n)}(\Hom(\pi_1(\Si), U(n))_+;\bQ),\quad
P_t^{U(n)}(\Hom(\pi_1(\Si), U(n))_-;\bQ),\\
&& P_t^{SU(n)}(\Hom(\pi_1(\Si), SU(n));\bQ),
\end{eqnarray*}
where  $+$ and $-$ label the components
corresponding to the trivial and nontrivial
$U(n)$-bundles over $\Si$, respectively.
Indeed we show that these formulas
hold if and only if equivariant $\bQ$-antiperfection holds
in the rank 2 and rank 3 cases.
Our rank 2 formulas \eqref{eqn:Utwo-plus}, \eqref{eqn:Utwo-minus},
\eqref{eqn:SUtwo} agree with formulas proved by
T. Baird \cite{B1}. During the revision of this paper, 
Baird established equivariant $\bQ$-antiperfection in the $U(3)$ case,
and thus verified our conjectural $U(3)$ formula \eqref{eqn:Uthree} 
\cite{B3}. 

\subsection*{Acknowledgments}
We thank Daniel Ramras, Michael Thaddeus, and  Graeme Wilkin
for helpful communications. We thank
Thomas Baird for pointing out a gap in
the first version of this paper.
The first author was partially supported by an NSC grant 
97-2628-M-006-013-MY2.
The second author was partially supported by
the Sloan Research Fellowship.

\section{Preliminaries, Definitions, and Statements of Results}

\subsection{Morse stratification}
Let $\cA$ be the space of connections
on a principal $U(n)$-bundle or $SU(n)$-bundle $P$ over a connected, closed, orientable
or nonorientable surface $\Si$, and let $\cG=\Aut(P)$ be the
group of unitary gauge transformations.  $\cA$ is an infinite
dimensional affine space, equipped with a $\cG$-invariant
Riemannian metric.   The Yang-Mills functional
$L:\cA\to \bR$ is invariant under the action of
the gauge group $\cG$, and defines a $\cG$-equivariant
Morse stratification
\begin{equation}\label{eqn:strata}
\cA =\bigcup_{\mu\in \Lambda} \cA_\mu =\cA_{ss}\cup \bigcup_{\mu\in \Lambda'} \cA_\mu
\end{equation}
where $\cA_{ss}$ is the unique open stratum.
When $n=1$, there is only one stratum: $\cA=\cA_{ss}$. From
now on we will assume $n>1$.

The index set $\Lambda$ is partially ordered
such that given $I\subset \Lambda$,
$\cA_I:= \displaystyle{\bigcup_{\lambda \in I}\cA_\lambda}$
is open if $\lambda\in I$, $\mu\leq \lambda \Rightarrow  \mu\in I$;
this partial ordering can be refined to a total ordering \cite[Section 2]{R}.
In the following discussion, {\em we fix a total ordering on $\Lambda$} so that we have
a filtration of $\cA$ by open subsets.
Given $\mu\in \Lambda'$, let $J= \{\lambda\in \Lambda\mid \lambda \leq \mu\}$, and let
$I= J - \{\mu\}$, so that  $\cA_I\subset \cA_J\subset \cA$
are inclusions of open subsets. We have the following isomorphisms of
$\cG$-equivariant cohomology groups:
\begin{equation}\label{eqn:excision}
H^k_\cG (\cA_J,\cA_I)\stackrel{\textup{excision} }{\cong}
H^k_\cG( (\cA_\mu)_\epsilon, (\cA_\mu)_\epsilon-\cA_\mu)
\stackrel{\textup{Thom isomorphism} }{\cong} H^{k-\lambda_\mu}_\cG (\cA_\mu)
\end{equation}
where $(\cA_\mu)_\ep$ is a $\cG$-equivariant tubular neighborhood of
$\cA_\mu$ in $\cA_J$ (see \cite[Section 3]{R} for a construction of
$(\cA_\mu)_\ep$), and $\lambda_\mu$ is the rank of the
normal bundle $\bN_\mu$ of $\cA_\mu$ in $\cA$. The normal bundle
$\bN_\mu\to \cA_\mu$ is  a $\cG$-equivariant complex vector bundle when $\Si$ is orientable, and
is a $\cG$-equivariant orientable real vector bundle
when $\Si$ is nonorientable \cite{HLR}
(when $\Si$ is the Klein bottle, we assume that $n=2$ or $3$),
so the Thom isomorphism in \eqref{eqn:excision} holds for any coefficient ring.
We may identify the pair $\left((\cA_\mu)_\ep, (\cA_\mu)_\ep-\cA_\mu\right)$ with
$\left(\bN_\mu, (\bN_\mu)_0\right)$, where $(\bN_\mu)_0$ is the
complement of the zero section of the vector bundle
$\bN_\mu\to \cA_\mu$.  We have the following
commutative diagram for any coefficient ring:

\begin{equation}\label{eqn:diagram}
\begin{CD}
H^k_\cG(\cA_J,\cA_I) @>{\alpha^k}>> H^k_\cG(\cA_J) @>{\beta^k}>>
H^k_\cG(\cA_I)
@>{\gamma^k}>> H^{k+1}_\cG(\cA_J,\cA_I) @>{\alpha^{k+1}}>>\cdots \\
@VV{\cong}V @VV{j^k}V @VV{i^k}V @VV\cong V\\
H^k_\cG(\bN_\mu,(\bN_\mu)_0) @>{\alpha^k_\ep}>> H^k_\cG(\bN_\mu)
@>{\beta^k_\epsilon}>> H^k_\cG((\bN_\mu)_0) @>{\gamma^k_\epsilon}>>
 H^{k+1}_\cG(\bN_\mu,(\bN_\mu)_0) @>{\alpha^{k+1}_\ep}>> \cdots \\
@VV{\cong}V  @VV{ \stackrel{s^k}{\cong} }V \\
H^{k-\lambda_\mu}_\cG(\cA_\mu) @>{\cup e_\cG(\bN_\mu)}>> H^k_\cG(\cA_\mu)
\end{CD}
\end{equation}
where $i^k$, $j^k$, $s^k$ are induced by inclusions. From the above
Diagram \eqref{eqn:diagram} we see that $\Ker(\alpha^k)\subset \Ker(\alpha^k_\ep)$
under the identification $H^k_\cG(\cA_J,\cA_I)\cong
H^k_\cG(\bN_\mu,(\bN_\mu)_0)$.

\subsection{Morse inequalities}\label{sec:inequalities}
We now consider field coefficient $K$, so that
the cohomology groups are vector spaces over $K$.
For any $\mu\in \Lambda'$, we define
$$
Z^k_\cG(\cA_\mu;K)= \Ker\left(H^k_\cG (\cA_\mu;K)\cong
H^{k+\lambda_\mu}_\cG(\cA_J,\cA_I;K)
\stackrel{\alpha^{k+\lambda_\mu}}{\longrightarrow}
H^{k+\lambda_\mu}_\cG(\cA_J;K)\right)
$$
so that $Z^k_\cG(\cA_\mu;K)$ is a subspace of $H^k_{\cG}(\cA_\mu;K)$.
We have an exact sequence
\begin{equation}\label{eqn:five}
0 \to H^{k-\lambda_\mu}_\cG (\cA_\mu;K)/Z^{k-\lambda_\mu}_\cG (\cA_\mu;K)
\to H^k_\cG(\cA_J;K)\to H^k_\cG(\cA_I;K)
\to Z_\cG^{k+1-\lambda_\mu}(\cA_\mu;K)\to 0.
\end{equation}
Define a power series
$$
Z_t^\cG(\cA_\mu;K)=\sum_{k=0}^\infty t^k \dim_K Z^k_\cG(\cA_\mu;K) \in \bZ[[t ]].
$$
Then the exact sequence \eqref{eqn:five} implies
\begin{equation}\label{eqn:JI}
 P_t^\cG(\cA_J;K) + (1+t) t^{\lambda_\mu-1} Z_t^\cG(\cA_\mu;K)
= P_t^\cG(\cA_I;K) + t^{\lambda_\mu} P_t^\cG(\cA_\mu;K).
\end{equation}

Given two power series $p(t), q(t)\in \bZ[[t ]]$, we say
$p(t)\leq q(t)$ if $q(t)-p(t)$ is a power series with
nonnegative coefficients.  Then
\begin{equation}\label{eqn:ZPmu}
0\leq Z_t^\cG(\cA_\mu;K) \leq P_t^\cG(\cA_\mu;K).
\end{equation}

Define
$$
R_t^{\cG}(K) = \sum_{\mu\in \Lambda'} t^{\lambda_\mu-1} Z^\cG_t(\cA_\mu;K),\quad
\tilde{M}_t^\cG(K) =\sum_{\mu\in \Lambda'} t^{\lambda_\mu-1} P_t^\cG(\cA_\mu;K).
$$
Note that $\lambda_\mu-1\geq 0$ for $\mu\in \Lambda'$, so
$R_t^\cG(K)$, $\tilde{M}_t^\cG(K)$ are power series in $\bZ[[t]]$
with nonnegative coefficients.  The following lemma follows from the
definitions and  \eqref{eqn:ZPmu}.
\begin{lm} \label{lm:RtM}
\begin{equation}\label{eqn:RtM}
0\leq R_t^\cG(K)\leq \tM_t^\cG(K).
\end{equation}
Moreover,
\begin{enumerate}
\item[(i)] $R_t^\cG(K)=0$ if and only if $Z_t^\cG(\cA_\mu;K)=0$ for all $\mu\in \Lambda'$;
\item[(ii)] $R_t^\cG(K)=\tM_t^\cG(K)$ if and only if  $Z_t^\cG(\cA_\mu;K)
=P_t^\cG(\cA_\mu;K)$ for all $\mu\in \Lambda'$.
\end{enumerate}
\end{lm}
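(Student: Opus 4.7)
The plan is to extract both the inequality \eqref{eqn:RtM} and the two characterizations (i), (ii) directly from the termwise inequality \eqref{eqn:ZPmu} together with the fact that the Morse indices satisfy $\lambda_\mu \geq 1$ for every $\mu \in \Lambda'$. The key observation is that every object in sight is a formal power series with nonnegative integer coefficients, and the partial order $p(t)\leq q(t)$ on $\bZ[[t]]$ is stable under multiplication by a monomial $t^N$ with $N\geq 0$ and under termwise summation.

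First I would fix $\mu\in \Lambda'$ and multiply the chain of inequalities \eqref{eqn:ZPmu} by $t^{\lambda_\mu - 1}$. Since $\lambda_\mu\geq 1$ (which is the reason this exponent was chosen in the definitions of $R_t^\cG(K)$ and $\tM_t^\cG(K)$), this monomial has nonnegative coefficients, so the inequalities are preserved:
\[
0 \;\leq\; t^{\lambda_\mu-1} Z_t^\cG(\cA_\mu;K) \;\leq\; t^{\lambda_\mu-1} P_t^\cG(\cA_\mu;K).
\]
Summing over $\mu \in \Lambda'$ then yields the desired bound $0\leq R_t^\cG(K)\leq \tM_t^\cG(K)$.

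For part (i), I would argue that since each term $t^{\lambda_\mu-1} Z_t^\cG(\cA_\mu;K)$ is a power series with nonnegative coefficients, their sum $R_t^\cG(K)$ vanishes if and only if each summand vanishes; and because $t^{\lambda_\mu-1}$ is not a zero divisor in $\bZ[[t]]$, this is equivalent to $Z_t^\cG(\cA_\mu;K)=0$ for every $\mu\in\Lambda'$. For part (ii), I would consider the difference
\[
\tM_t^\cG(K) - R_t^\cG(K) = \sum_{\mu\in\Lambda'} t^{\lambda_\mu-1}\bigl(P_t^\cG(\cA_\mu;K) - Z_t^\cG(\cA_\mu;K)\bigr),
\]
which is a sum of power series with nonnegative coefficients by \eqref{eqn:ZPmu}. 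The same vanishing argument then shows this difference is zero iff $Z_t^\cG(\cA_\mu;K)=P_t^\cG(\cA_\mu;K)$ for every $\mu\in\Lambda'$.

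There is no real obstacle here; the lemma is essentially a bookkeeping consequence of the termwise estimate \eqref{eqn:ZPmu}. The only point requiring a brief justification is that $\lambda_\mu \geq 1$ on $\Lambda'$, which holds because $\cA_{ss}$ is the unique open (index-zero) stratum and has been excluded from the index set $\Lambda'$.
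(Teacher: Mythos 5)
Your argument is correct and is exactly the intended one: the paper simply remarks that the lemma ``follows from the definitions and \eqref{eqn:ZPmu},'' and your write-up spells out precisely that bookkeeping, including the observation that $\lambda_\mu\geq 1$ on $\Lambda'$ and that a sum of power series with nonnegative coefficients vanishes iff each summand does. No discrepancy with the paper's approach.
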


\begin{rem}
A priori the definitions $Z_t^\cG(K)$ and $R_t^\cG(K)$ depends on
the choice of the total ordering when such total ordering is not
unique, since the index set $J=\{\lambda\in \Lambda\mid \lambda\leq
\mu\}$ depends on the total ordering. By \eqref{eqn:PRM} below,
$R_t^\cG(K)$ does not depend on the choice. $R_t^\cG(K)$ can be
defined for more general equivariant Morse stratification which
contains a unique open stratum.
\end{rem}

Define the $\cG$-equivariant Morse series of the stratification \eqref{eqn:strata}
as follows.
\begin{df}[Morse series] We define the $\cG$-equivariant Morse series of
the $\cG$-equivariant stratification \eqref{eqn:strata} relative
to the coefficient field $K$ to be

\begin{equation}
M_t^{\cG}(K)= \sum_{\mu\in \Lambda} t^{\lambda_\mu} P_t^\cG(\cA_\mu;K)
=P_t^{\cG}(\cA_{ss};K) + t \tilde{M}_t^\cG(K).
\end{equation}
\end{df}

From  \eqref{eqn:JI} and  \eqref{eqn:RtM} we obtain the following.
\begin{lm}[Morse inequalities]\label{lm:Mneq}
\begin{equation}\label{eqn:PRM}
P_t^\cG(\cA;K) + (1+t) R_t^\cG(K) = M_t^{\cG}(K)
= P_t^\cG(\cA_{ss};K) + t\tilde{M}_t^\cG(K)
\end{equation}
where
$$
0\leq R_t^\cG(K)\leq \tilde{M}_t^\cG(K) =\sum_{\mu\in
\Lambda'}t^{\lambda_\mu-1} P_t(\cA_\mu;K).
$$
Therefore
$$
P_t^{\cG}(\cA;K)-\sum_{\mu\in\Lambda'}t^{\lambda_\mu}P_t^{\cG}(\cA_\mu;K) \leq
P_t^\cG(\cA_{ss};K) \leq
P_t^{\cG}(\cA;K)+ \sum_{\mu\in \Lambda'}t^{\lambda_\mu-1}P_t^{\cG}(\cA_\mu;K).
$$
\end{lm}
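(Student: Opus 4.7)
The plan is to derive \eqref{eqn:PRM} by summing the local identity \eqref{eqn:JI} over all $\mu\in\Lambda'$. Since $\cA_{ss}$ is the unique open stratum, it is the minimum element of the partial ordering on $\Lambda$ and hence of any refining total ordering; as $\mu$ runs through $\Lambda'$ in that total ordering, the open sets $\cA_J$ with $J=\{\lambda\le\mu\}$ form an exhausting increasing filtration of $\cA$ starting from $\cA_{ss}$, and $\cA_I$ with $I=J-\{\mu\}$ is precisely the $\cA_J$ of the immediately preceding index. Rewriting \eqref{eqn:JI} as
$$
P_t^\cG(\cA_J;K)-P_t^\cG(\cA_I;K)= t^{\lambda_\mu}P_t^\cG(\cA_\mu;K)-(1+t)t^{\lambda_\mu-1}Z_t^\cG(\cA_\mu;K)
$$
and summing over $\mu\in\Lambda'$, the left-hand side telescopes to $P_t^\cG(\cA;K)-P_t^\cG(\cA_{ss};K)$, while the right-hand side becomes $t\tM_t^\cG(K)-(1+t)R_t^\cG(K)$ by the definitions of $\tM_t^\cG(K)$ and $R_t^\cG(K)$. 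Rearranging yields the left equality of \eqref{eqn:PRM}. The right equality $M_t^\cG(K)=P_t^\cG(\cA_{ss};K)+t\tM_t^\cG(K)$ is immediate from the definition of $M_t^\cG(K)$ upon separating the $\mu=ss$ summand (for which $\lambda_\mu=0$).

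The main technical point I expect to worry about is that $\Lambda'$ is typically infinite, so the telescoping above must be interpreted as an identity of formal power series rather than a limit in the naive sense. The key observation is that the Morse indices $\lambda_\mu$ tend to infinity along the total ordering, so for each fixed $k$ only finitely many $\mu\in\Lambda'$ contribute to the $t^k$-coefficient of any of the series involved. This allows the telescoping to be carried out coefficientwise. Alternatively, one can prove the identity for each finite initial segment $\Lambda_N\subset\Lambda$ by induction on $N$ (with \eqref{eqn:JI} as the inductive step) and then pass to the limit $N\to\infty$, using that $\cG$-equivariant cohomology of the exhausting open union $\bigcup_N \cA_{J_N}=\cA$ stabilizes in each degree.

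The inequality $0\le R_t^\cG(K)\le\tM_t^\cG(K)$ is precisely Lemma~\ref{lm:RtM}, so nothing further is needed there. For the final two-sided bound on $P_t^\cG(\cA_{ss};K)$, solve \eqref{eqn:PRM} to obtain
$$
P_t^\cG(\cA_{ss};K)= P_t^\cG(\cA;K)+(1+t)R_t^\cG(K)-t\tM_t^\cG(K),
$$
and apply $R_t^\cG(K)\ge 0$ for the lower bound (which gives the correction $-t\tM_t^\cG(K)=-\sum_\mu t^{\lambda_\mu}P_t^\cG(\cA_\mu;K)$), and $R_t^\cG(K)\le\tM_t^\cG(K)$ for the upper bound (so $(1+t)R_t^\cG(K)-t\tM_t^\cG(K)\le\tM_t^\cG(K)=\sum_\mu t^{\lambda_\mu-1}P_t^\cG(\cA_\mu;K)$). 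Thus once the telescoping is justified, the remaining assertions reduce to bookkeeping on the already-established bounds for $R_t^\cG(K)$.
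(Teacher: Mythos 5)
Your proof is correct and follows essentially the same route the paper intends: the paper's proof is the one-line remark "From \eqref{eqn:JI} and \eqref{eqn:RtM} we obtain the following," i.e.\ telescope \eqref{eqn:JI} over $\mu\in\Lambda'$ and then apply the bounds on $R_t^\cG(K)$. Your extra care about the formal-power-series justification of the telescoping (finitely many $\mu$ contribute to each $t^k$ coefficient because $\lambda_\mu\to\infty$) is exactly the right technical point, though the paper leaves it implicit.
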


\begin{rem}\label{ABT}
When $\Si$ is orientable, Atiyah and Bott proved that
$\alpha^k$ is injective for all $k$ and for all $\mu\in \Lambda'$
when $K=\bQ$ or $K=\bZ_p$ ($p$ any prime) \cite{ym}. So when
$K=\bQ$ or $K=\bZ_p$ ($p$ any prime),  $R_t^\cG(K)=0$, and
$$
P_t^{\cG}(\cA;K)-\sum_{\mu\in\Lambda'}t^{\lambda_\mu}P_t^{\cG}(\cA_\mu;K)
= P_t^\cG(\cA_{ss};K).
$$
\end{rem}

\subsection{Perfect stratification and antiperfect stratification}\label{sec:main}

In Remark \ref{ABT}, the stratification is
said to be equivariantly $K$-perfect.
Motivated by the definition of $K$-perfect
stratification in \cite{ym}
and the extremal cases of the Morse inequalities
(Lemma \ref{lm:Mneq}), we introduce the following
definitions. Let $\alpha^k$ and $\alpha^k_\ep$
be as in Diagram \eqref{eqn:diagram}.

\begin{df}[perfect stratification and antiperfect stratification]\label{global}
We say the $\cG$-equivariant stratification \eqref{eqn:strata} is
{\em equivariantly $K$-perfect} if
$$
\alpha^k: H^k_\cG(\cA_J,\cA_I;K)\to H^k_\cG(\cA_J;K)
$$
is injective for all $k$ and all $\mu\in \Lambda'$;
we say \eqref{eqn:strata} is
{\em equivariantly $K$-antiperfect}
if $\alpha^k=0$ for all $k$ and all $\mu\in \Lambda'$.
\end{df}

\begin{rem}
By Lemma \ref{lm:perfect} and Lemma \ref{lm:anti-perfect} below, the
definitions in Definition \ref{global} do not depend on the choice
of total ordering.
\end{rem}

\begin{df}[locally perfect stratification and locally antiperfect stratification]\label{local}
We say the $\cG$-equivariant stratification \eqref{eqn:strata}
is {\em locally equivariantly $K$-perfect} if
$$
\alpha^k_\ep:H^{k}_\cG(\bN_\mu, (\bN_\mu)_0;K)\to H^k_\cG(\bN_\mu;K)
$$
is injective for all $k$ and all  $\mu\in \Lambda'$;
we say \eqref{eqn:strata}
is {\em locally equivariantly $K$-antiperfect} if
$\alpha^k_\ep=0$ for all $k$ and all $\mu\in \Lambda'$.
\end{df}

\begin{rem}\label{local-global}
Since $\Ker(\alpha_k)\subset \Ker(\alpha_k^\ep)$, it is immediate
from Definition \ref{global} and Definition \ref{local} that
\begin{eqnarray*}
&& \textit{\eqref{eqn:strata} is locally equivariantly $K$-perfect
$\Rightarrow$ \eqref{eqn:strata} is equivariantly $K$-perfect.}\\
&&\textit{\eqref{eqn:strata} is equivariantly $K$-antiperfect
$\Rightarrow$ \eqref{eqn:strata} is locally equivariantly
$K$-antiperfect.}
\end{eqnarray*}
\end{rem}

From Definition \ref{global} and the discussion in Section \ref{sec:inequalities},
we have the following equivalent conditions of
equivariant perfection and antiperfection.
\begin{lm}[reformulation of equivariant perfection]\label{lm:perfect}
The following conditions are equivalent:
\begin{enumerate}
\item[P1.] \eqref{eqn:strata} is an equivariantly $K$-perfect stratification.
\item[P2.] For any $\mu\in \Lambda'$, the long exact sequence
\begin{equation}\label{eqn:LES}
\cdots\to H^{k-\lambda_\mu}_{\cG}(\cA_\mu;K)\to H^k_{\cG}(\cA_J;K)\to H^k_\cG(\cA_I;K)\to
H^{k+1-\lambda_\mu}_{\cG}(\cA_\mu;K)\to \cdots
\end{equation}
breaks into short exact sequences
$$
0\to H^{k-\lambda_\mu}_{\cG}(\cA_\mu;K)\to H^k_{\cG}(\cA_J;K)\to H^k_{\cG}(\cA_I;K) \to 0.
$$

\item[P3.] $Z_t^\cG(\cA_\mu;K)=0$ for all $\mu\in \Lambda'$.
\item[P4.] $R_t^\cG(K)=0$.
\item[P5.] $\displaystyle{
P_t^\cG(\cA;K)=P_t^\cG(\cA_{ss};K)+\sum_{\mu\in \Lambda'}t^{\lambda_\mu} P_t^\cG(\cA_\mu;K)}$.
\end{enumerate}
\end{lm}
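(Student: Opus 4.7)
The plan is to organize the five conditions into a short cycle of implications, taking advantage of the fact that most of the required bookkeeping has already been done in the preceding subsection. My route would be the direct equivalences P1 $\Leftrightarrow$ P2, P1 $\Leftrightarrow$ P3, P3 $\Leftrightarrow$ P4, and P4 $\Leftrightarrow$ P5, each of which is essentially an unpacking of definitions; no cyclic argument is needed.

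First, I would establish P1 $\Leftrightarrow$ P2 by the standard observation that a long exact sequence breaks into short exact sequences precisely when every connecting homomorphism $\gamma^k$ vanishes, equivalently when every $\alpha^{k+1}$ is injective. The long exact sequence appearing in P2 is exactly the top row of Diagram \eqref{eqn:diagram} after applying the Thom isomorphism $H^k_\cG(\cA_J,\cA_I;K)\cong H^{k-\lambda_\mu}_\cG(\cA_\mu;K)$, so this is pure homological algebra with no content beyond the diagram.

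Next, P1 $\Leftrightarrow$ P3 is immediate from the very definition of $Z^k_\cG(\cA_\mu;K)$: it is by construction the kernel of $\alpha^{k+\lambda_\mu}$ under the Thom identification, hence vanishes for every $k$ and every $\mu\in\Lambda'$ exactly when every such $\alpha^k$ is injective. The equivalence P3 $\Leftrightarrow$ P4 is then precisely Lemma \ref{lm:RtM}(i), which itself rests on the fact that each summand $t^{\lambda_\mu-1}Z_t^\cG(\cA_\mu;K)$ in the definition of $R_t^\cG(K)$ has nonnegative coefficients.

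Finally, for P4 $\Leftrightarrow$ P5 I would substitute the identity $M_t^\cG(K)=\sum_{\mu\in\Lambda}t^{\lambda_\mu}P_t^\cG(\cA_\mu;K)$ into the Morse identity \eqref{eqn:PRM}, obtaining
$$P_t^\cG(\cA;K)+(1+t)R_t^\cG(K)=\sum_{\mu\in\Lambda}t^{\lambda_\mu}P_t^\cG(\cA_\mu;K).$$
The direction P4 $\Rightarrow$ P5 is immediate; for P5 $\Rightarrow$ P4 one uses that $1+t$ is not a zero divisor in $\bZ[[t]]$ (its constant term is a unit), so $(1+t)R_t^\cG(K)=0$ forces $R_t^\cG(K)=0$. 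There is no real obstacle in the argument; the substantive content of the lemma is just that a single homological vanishing condition on the boundary maps in \eqref{eqn:LES} admits five equivalent reformulations, ranging from purely cohomological (P1, P2) to purely numerical (P3, P4, P5).
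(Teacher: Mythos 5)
Your proof is correct and takes essentially the same route as the paper, which simply asserts that the equivalences "follow from Definition 2.5 and the discussion in Section 2.2" without spelling them out; you have filled in precisely those details (the standard splitting criterion for the long exact sequence, the definition of $Z^k_\cG$, Lemma~\ref{lm:RtM}(i), and the Morse identity \eqref{eqn:PRM} together with the fact that $1+t$ is a unit in $\bZ[[t]]$).
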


\begin{lm}[reformulation of equivariant antiperfection]\label{lm:anti-perfect}
The following conditions are equivalent:
\begin{enumerate}
\item[A1.] \eqref{eqn:strata} is an equivariantly $K$-antiperfect stratification.
\item[A2.] For any $\mu\in \Lambda'$, the long exact sequence \eqref{eqn:LES}
breaks into short exact sequences
$$
0\to H^k_{\cG}(\cA_J;K)\to H^k_{\cG}(\cA_I;K) \to H^{k+1-\lambda_\mu}_\cG(\cA_\mu;K)\to 0.
$$
\item[A3.] $Z_t^\cG(\cA_\mu;K)=P_t^\cG(\cA_\mu;K)$ for all $\mu\in \Lambda'$.
\item[A4.] $R_t^\cG(K)=\tilde{M}_t^\cG(K)$.
\item[A5.] $\displaystyle{
P_t^\cG(\cA_{ss};K)=P_t^\cG(\cA;K)+\sum_{\mu\in \Lambda'}t^{\lambda_\mu-1} P_t^\cG(\cA_\mu;K)}$.
\end{enumerate}
\end{lm}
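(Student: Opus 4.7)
The plan is to prove the chain of equivalences $\text{A1}\Leftrightarrow\text{A2}$, $\text{A1}\Leftrightarrow\text{A3}$, $\text{A3}\Leftrightarrow\text{A4}$, $\text{A4}\Leftrightarrow\text{A5}$. All four equivalences are essentially formal once the definitions and Diagram \eqref{eqn:diagram} are unpacked, paralleling the proof of Lemma \ref{lm:perfect}. I do not expect any serious obstacle; the content is purely bookkeeping in the long exact sequence \eqref{eqn:LES} and in the power-series identity \eqref{eqn:PRM}.

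For $\text{A1}\Leftrightarrow\text{A2}$, I would read the long exact sequence
$$\cdots\to H^k_\cG(\cA_J,\cA_I;K)\xrightarrow{\alpha^k}H^k_\cG(\cA_J;K)\xrightarrow{\beta^k}H^k_\cG(\cA_I;K)\xrightarrow{\gamma^k}H^{k+1}_\cG(\cA_J,\cA_I;K)\to\cdots$$
under the Thom isomorphism $H^{k+1}_\cG(\cA_J,\cA_I;K)\cong H^{k+1-\lambda_\mu}_\cG(\cA_\mu;K)$. The vanishing of every $\alpha^k$ is equivalent to $\beta^k$ being injective for all $k$ and $\gamma^k$ being surjective for all $k$, which is exactly the splitting into short exact sequences in A2.

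For $\text{A1}\Leftrightarrow\text{A3}$, I would use the very definition
$$Z_\cG^k(\cA_\mu;K)=\Ker\bigl(H^k_\cG(\cA_\mu;K)\cong H^{k+\lambda_\mu}_\cG(\cA_J,\cA_I;K)\xrightarrow{\alpha^{k+\lambda_\mu}}H^{k+\lambda_\mu}_\cG(\cA_J;K)\bigr).$$
Since $H^j_\cG(\cA_J,\cA_I;K)=0$ for $j<\lambda_\mu$ by the Thom isomorphism, $\alpha^j$ is automatically zero in that range, so A1 is equivalent to $\alpha^{k+\lambda_\mu}=0$ for all $k\ge 0$ and all $\mu\in\Lambda'$. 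As $Z^k_\cG(\cA_\mu;K)$ is a subspace of $H^k_\cG(\cA_\mu;K)$, the equality of the generating series $Z_t^\cG(\cA_\mu;K)=P_t^\cG(\cA_\mu;K)$ forces $Z^k_\cG(\cA_\mu;K)=H^k_\cG(\cA_\mu;K)$ for every $k$, which is the vanishing of those $\alpha$'s, and conversely.

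For $\text{A3}\Leftrightarrow\text{A4}$, I would invoke Lemma \ref{lm:RtM}(ii) directly, since the definitions of $R_t^\cG(K)$ and $\tM_t^\cG(K)$ differ only by replacing each $Z_t^\cG(\cA_\mu;K)$ with $P_t^\cG(\cA_\mu;K)$, and the coefficients are nonnegative. Finally, for $\text{A4}\Leftrightarrow\text{A5}$, I would substitute $R_t^\cG(K)=\tM_t^\cG(K)$ into the Morse identity \eqref{eqn:PRM}, which after rearrangement reads
$$P_t^\cG(\cA_{ss};K)=P_t^\cG(\cA;K)+\tM_t^\cG(K)=P_t^\cG(\cA;K)+\sum_{\mu\in\Lambda'}t^{\lambda_\mu-1}P_t^\cG(\cA_\mu;K),$$
and conversely A5 substituted into \eqref{eqn:PRM} gives $(1+t)R_t^\cG(K)=(1+t)\tM_t^\cG(K)$, hence $R_t^\cG(K)=\tM_t^\cG(K)$ since $\bZ[[t]]$ has no $(1+t)$-torsion.
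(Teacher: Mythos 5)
Your proof is correct and follows the route the paper intends: it reads off A1$\Leftrightarrow$A2 and A1$\Leftrightarrow$A3 from the long exact sequence and Diagram \eqref{eqn:diagram}, uses Lemma \ref{lm:RtM}(ii) verbatim for A3$\Leftrightarrow$A4, and substitutes into \eqref{eqn:PRM} for A4$\Leftrightarrow$A5. The only step you leave tacit is that passing from the power-series identity $Z_t^\cG(\cA_\mu;K)=P_t^\cG(\cA_\mu;K)$ back to the subspace identity $Z^k_\cG(\cA_\mu;K)=H^k_\cG(\cA_\mu;K)$ uses the finite-dimensionality of each $H^k_\cG(\cA_\mu;K)$, which is standard in this setting but worth a word.
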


When $\Si$ is orientable, and $K=\bQ$ or $K=\bZ_p$ ($p$ any prime),
Atiyah and Bott showed that $e_\cG(\bN_\mu)$ in the commutative Diagram
\eqref{eqn:diagram} is not a zero divisor in $H^*_{\cG}(\cA_\mu;K)$, so $\alpha^k_\ep$
is injective. We may reformulate this result as follows.
\begin{thm}[Atiyah-Bott] \label{thm:local-orientable}
Let $\cA$ be the space of connections
on a principal $U(n)$-bundle or $SU(n)$-bundle over a Riemann surface.
Let $K=\bQ$ or $K=\bZ_p$ ($p$ any prime).
Then  the stratification \eqref{eqn:strata} is
locally equivariantly $K$-perfect; therefore
it is equivariantly $K$-perfect.
\end{thm}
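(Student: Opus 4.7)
The plan is to combine Remark \ref{local-global} with a direct analysis of Diagram \eqref{eqn:diagram}. By Remark \ref{local-global}, local equivariant $K$-perfection implies equivariant $K$-perfection, so it suffices to show that $\alpha^k_\ep$ is injective for every $k$ and every $\mu\in\Lambda'$. Commutativity of the bottom square in Diagram \eqref{eqn:diagram}, together with the Thom isomorphism $s^k$ and the Thom iso on the left column, identifies $\alpha^k_\ep$ with the cup product map $\cup e_\cG(\bN_\mu):H^{k-\lambda_\mu}_\cG(\cA_\mu;K)\to H^k_\cG(\cA_\mu;K)$. Thus the theorem reduces to the assertion that the equivariant Euler class $e_\cG(\bN_\mu)$ is not a zero divisor in $H^*_\cG(\cA_\mu;K)$ when $K=\bQ$ or $K=\bZ_p$.

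To establish this, I would use that $\cA_\mu$ $\cG$-equivariantly deformation retracts onto the Yang--Mills critical set $\cN_\mu$. For a Harder--Narasimhan type $\mu=(n_1,k_1;\ldots;n_r,k_r)$, a critical connection determines a splitting $E=\bigoplus_{i=1}^r E_i$ with each $E_i$ semistable of rank $n_i$ and degree $k_i$, and the Levi-type gauge subgroup $\LL_\mu\subset\cG$ that preserves this splitting yields an identification
\begin{equation*}
H^*_\cG(\cN_\mu;K)\;\cong\;\bigotimes_{i=1}^r H^*_{\cG(n_i)}\bigl(\cA_{ss}(n_i,k_i);K\bigr).
\end{equation*}
Correspondingly, the normal bundle decomposes as $\bN_\mu=\bigoplus_{i<j}\bN_{ij}$, where $\bN_{ij}$ is a $\cG$-equivariant complex bundle with fiber at $(E_i,E_j)$ equal to $H^1(\Si;\Hom(E_j,E_i))$. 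Each $e_\cG(\bN_{ij})$ admits an explicit expression as a polynomial in the Atiyah--Bott universal classes on $B\cG(n_i)\times B\cG(n_j)$, and the total Euler class factors as $e_\cG(\bN_\mu)=\prod_{i<j}e_\cG(\bN_{ij})$.

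With these identifications the theorem becomes a purely algebraic statement: this explicit product is a non-zero-divisor in a tensor product of quotients of polynomial algebras. I would verify it by first proving the corresponding non-zero-divisor property in the ambient polynomial ring $\bigotimes_i H^*(B\cG(n_i);K)$, via a leading-term argument on Chern roots that exhibits a regular monomial representative, and then transporting the conclusion along the Atiyah--Bott surjection $H^*(B\cG;K)\twoheadrightarrow H^*_\cG(\cA_{ss};K)$. The hypothesis $K=\bQ$ or $K=\bZ_p$ enters precisely here: it guarantees torsion-freeness of $H^*(BU(n);K)$ and $H^*(BSU(n);K)$ so that the leading-term computation is not obstructed by integral torsion on the classifying-space side. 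The main obstacle, and where the real work lies, is this combinatorial/algebraic check: one must pin down the leading monomials of the classes $e_\cG(\bN_{ij})$ in the Atiyah--Bott generators precisely enough that the non-zero-divisor property propagates across the tensor product and survives passage through the Atiyah--Bott relations defining $H^*_\cG(\cA_{ss};K)$ as a quotient of $H^*(B\cG;K)$.
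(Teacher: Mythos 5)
The paper's proof of this theorem is essentially a citation: it quotes Atiyah--Bott's result that $e_\cG(\bN_\mu)$ is not a zero divisor in $H^*_\cG(\cA_\mu;K)$ for $K=\bQ$ or $\bZ_p$, which by the bottom square of Diagram \eqref{eqn:diagram} makes $\alpha^k_\ep$ injective, i.e.\ the stratification is locally equivariantly $K$-perfect, and then applies Remark \ref{local-global}. Your first two paragraphs carry out exactly this reduction, correctly and in the same spirit as the paper.

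Where you then go further and sketch a proof of the non-zero-divisor assertion, there is a genuine gap. You propose to establish that $e_\cG(\bN_\mu)=\prod_{i<j}e_\cG(\bN_{ij})$ is a non-zero-divisor in the ambient polynomial ring $\bigotimes_i H^*(B\cG(n_i);K)$ by a leading-term argument, and then to ``transport the conclusion along the Atiyah--Bott surjection $H^*(B\cG;K)\twoheadrightarrow H^*_\cG(\cA_{ss};K)$.'' But being a non-zero-divisor does not descend along a ring surjection: if $R\to R/I$ is onto and $x\in R$ is a non-zero-divisor, $\bar x$ can perfectly well be a zero divisor in $R/I$ (take $R=K[x,y]$, $I=(xy)$). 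Controlling leading monomials in the generators of $H^*(B\cG)$ tells you nothing a priori about how $e_\cG(\bN_\mu)$ interacts with the Atiyah--Bott relations. The actual mechanism in \cite{ym} is different: there is a central torus $T^r$ (the constant central gauge transformations of the Levi factor) acting trivially on $\cN_\mu$ but with a \emph{primitive} weight decomposition on the fibers of $\bN_\mu$, and Proposition 13.4 of \cite{ym} deduces the non-zero-divisor property from this primitivity; this is also precisely the argument the present paper invokes in Lemma \ref{lemma:ENC} for $e_\cG(\bN_\mu^\bC)$. Your proposal would need to be replaced by (or rewritten to recover) this torus-weight criterion; as written, the final step does not go through.
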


\subsection{Yang-Mills theory on a closed nonorientable surface}
Let $\Si$ be a connected, closed, nonorientable surface.
Let $\tP$ be the pull back of $P$ to the orientable double
cover $\tSi\to \Si$. Recall that a stratum $\cA_\mu\subset \cA$ corresponds to
reduction of the structure group of $\tP\to \tSi$ (instead of
$P\to \Si$) to a subgroup
$$
U(n_1)\times \cdots \times U(n_r) \subset U(n)
$$
or
$$
\left( U(n_1)\times \cdots \times U(n_r)\right) \cap SU(n)\subset SU(n)
$$
where $n_1+\cdots+n_r=n$.
We say $\mu$ contains a rank 1 factor if
$n_j=1$ for some $j$.  In particular, when $n=2$ or $3$,
every $\mu\in \Lambda'$ contains a rank 1 factor
(see Section \ref{sec:euler}).

In Section \ref{sec:euler}, we prove the following:
\begin{thm}[vanishing of equivariant Euler class]\label{thm:vanishing}
Let $\cA$ be the space of connections on a principal
$U(n)$-bundle or $SU(n)$-bundle ($n>1$) over a connected, closed, nonorientable surface $\Si$.
When $\chi(\Si)=0$, so that $\Si$ is homeomorphic to
the Klein bottle, we assume in addition that $n\leq 3$.
We use rational coefficient $\bQ$.
\begin{enumerate}
\item[(i)] If $\Si=\RP^2$ then $e_{\cG}(\bN_\mu)=0$
for all $\mu\in \Lambda'$
\item[(ii)] If $\Si$ is not homeomorphic to $\RP^2$ then
$e_{\cG}(\bN_\mu)=0$ if $\mu$ contains a rank 1 factor.
\end{enumerate}
Therefore $\alpha^k_\ep=0$ for all $k$ in the above
two cases.
\end{thm}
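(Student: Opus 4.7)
The plan is to compute the $\cG$-equivariant Euler class $e_\cG(\bN_\mu) \in H^{\lambda_\mu}_\cG(\cA_\mu;\bQ)$ by pulling back to the orientable double cover $\pi\colon \tilde\Sigma\to\Sigma$ and exhibiting its vanishing through a weight/parity analysis of a natural decomposition. Once $e_\cG(\bN_\mu) = 0$ is established, the bottom row of Diagram~\eqref{eqn:diagram} identifies $\alpha^k_\ep$, under the Thom isomorphism $s^k$, with cup product by zero, yielding $\alpha^k_\ep = 0$ for all $k$.

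First I would use the identification $\cA = \tilde{\cA}^\tau$ to realize each stratum $\cA_\mu\subset\cA$ as the $\tau$-fixed locus of an Atiyah--Bott stratum $\tilde{\cA}_{\tilde\mu}\subset\tilde{\cA}$, corresponding to a $\tau$-invariant reduction of $\tP$ to a Levi subgroup $L = U(n_1)\times\cdots\times U(n_r)\subset G$. The antiholomorphic differential of $\tau$ induces an antilinear involution on the complex Atiyah--Bott normal bundle $V\to\tilde{\cA}_{\tilde\mu}$, and by \cite{HLR} the real normal bundle is precisely the real form $\bN_\mu = V|_{\cA_\mu}^\tau$, so that $\bN_\mu\otimes_\bR\bC\cong V|_{\cA_\mu}$ as $\cG$-equivariant complex bundles. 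The Atiyah--Bott block decomposition $V = \bigoplus_{i\neq j} V_{ij}$, where $V_{ij}$ is the family $H^1(\tilde\Sigma,\Hom(\xi_j,\xi_i))$ on which $Z(L) = U(1)^r$ acts with character $e_i - e_j$, is permuted by $\tau$ through the action $\sigma$ that $\tau$ induces on the Levi factors; this produces an orthogonal $\cG$-equivariant decomposition of $\bN_\mu$ into real summands indexed by $\tau$-orbits of pairs $(i,j)$.

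For part~(ii), the rank-one factor $n_{j_0} = 1$ forces $n_{\sigma(j_0)} = 1$, so the summands $V_{i,j_0}$ and $V_{j_0,i}$ involving the rank-one block come in a distinguished small pair. A case analysis in the subcases $\sigma(j_0) = j_0$ versus $\sigma(j_0)\neq j_0$, using Riemann--Roch on $\tilde\Sigma$ (which has genus $\geq 1$ because $\Sigma\neq\RP^2$) to compute the Dolbeault ranks and tracking the $\tau$-action on the relevant $\Hom$-bundles, shows that the associated real summand of $\bN_\mu$ either splits off a trivial $\cG$-equivariant real line bundle or has odd real rank -- in which case its oriented Euler class is $2$-torsion and hence zero over $\bQ$. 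By multiplicativity of the Euler class across the orthogonal decomposition, $e_\cG(\bN_\mu) = 0$. For part~(i), Grothendieck's splitting theorem on $\tilde\Sigma = \bP^1$ forces each $V_{ij}$ to be a computable sum of line bundles, and a stratum-by-stratum check -- not requiring any rank-one hypothesis -- produces the same type of vanishing factor for every $\mu\in\Lambda'$.

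The main obstacle is the bookkeeping in the rank-one analysis: carefully describing the antilinear $\tau$-action on the individual Atiyah--Bott summands $V_{ij}$ under the two $\sigma$-subcases and identifying the resulting real form together with its rank parity or trivial-weight subbundle. The $\RP^2$ case requires a separate direct argument because no rank-one hypothesis is assumed; there the positive Euler characteristic of $\bP^1$ must be exploited through the explicit Grothendieck decomposition. Once the combinatorial bookkeeping is in place, the remaining ingredients -- the real-form identification of $\bN_\mu$, the Atiyah--Bott block decomposition, and the final application of Diagram~\eqref{eqn:diagram} -- are routine.
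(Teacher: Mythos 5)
Your overall framework---realize $\cA_\mu$ as the $\tau$-fixed locus of an Atiyah--Bott stratum, identify $\bN_\mu$ as a real form of the complex normal bundle, decompose into blocks, and show that $e_\cG(\bN_\mu)$ has a vanishing factor---matches the paper's strategy, but the vanishing mechanism you propose has a genuine gap.

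The real summand attached to a rank-one block $\D_{j_0}$ has fiber $H^1(\tSi,\Hom(\D_{j_0},\tau_\cC(\D_{j_0})))^\tau$, whose real dimension is $2k_{j_0}+\tg-1$ by Riemann--Roch. For $\Si=\Si^\ell_1$ with $\ell\geq 1$ (so $\tg=2\ell$ is even) this rank is odd and your $2$-torsion argument does apply; but for $\Si=\Si^\ell_2$ (so $\tg=2\ell+1$ is odd, e.g.\ the Klein bottle) the rank is \emph{even}, the odd-rank argument fails, and your fallback---that the summand ``splits off a trivial $\cG$-equivariant real line bundle''---is asserted rather than proved. I see no reason it should hold: vanishing of the Euler class does not imply a nowhere-zero section, the degree-$(-2k_{j_0})$ Hom bundle has no canonical section, and nothing about the $\tau$-action forces such a splitting.

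The paper's Lemma~\ref{lemma:ENR} handles all parities at once by a different mechanism, and supplies two ingredients missing from your proposal. First, one checks that the residual $U(1)$-action on the descended bundle $V_{1,k}$ acts on the fiber $H^1(\tSi,\Hom(\LL,\tau_\cC(\LL)))^\tau$ with weight $t\cdot t^{-1}=1$, i.e.\ trivially; this is what lets one replace the \emph{equivariant} Euler class (a priori an element of a polynomial ring in which rank/dimension comparisons prove nothing) with the ordinary Euler class. Second, $V_{1,k}$ is exhibited as a pullback $\phi_X^*F_k^{\hat{\tau}_X}$ from a bundle over the torus $\ymS{2\ell+i-1}{0}_{-2k}^{\hat{\tau}_X}\cong U(1)^{2\ell+i-1}$, whose real rank $2k+2\ell+i-2$ strictly exceeds the real dimension $2\ell+i-1$ of the base because $k>0$; hence $e(F_k^{\hat{\tau}_X})=0$ and $e(V_{1,k})=\phi_X^*e(F_k^{\hat{\tau}_X})=0$. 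It is this dimension count, not a parity or splitting argument, that closes the case of even rank. Your part~(i) sketch likewise elides the actual step: since $\ymU{0}{0}_{d,\ldots,d}$ is a single point, the real summand $V_\bR$ of $V_{n,nd}$ carries trivial maximal-torus action and has positive rank $n(2d-1)$, so $e_T(V_\bR)\in H^{>0}(\pt;\bQ)=0$; the Grothendieck splitting of the $V_{ij}$ is not where the vanishing comes from.
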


\begin{cor}\label{thm:local-anti-perfect}
Let $\cA$ be the space of connections on a principal $U(n)$-bundle
or $SU(n)$-bundle ($n>1$) over a connected, closed, nonorientable surface $\Si$. Then
the Morse stratification \eqref{eqn:strata} is locally equivariant
$\bQ$-antiperfect in the following cases:
\begin{enumerate}
\item[(i)] $\Si=\RP^2$, $n$ any positive integer greater than 1;
\item[(ii)] $\Si$ is not homeomorphic to $\RP^2$, $n=2$ or $3$.
\end{enumerate}
\end{cor}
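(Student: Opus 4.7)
The plan is to observe that this corollary is essentially an immediate consequence of Theorem \ref{thm:vanishing}, packaged together with the simple combinatorial fact that partitions of $n=2$ or $n=3$ into two or more parts must contain a $1$. By Definition \ref{local}, local equivariant $\bQ$-antiperfection of \eqref{eqn:strata} is by definition the statement that $\alpha^k_\ep = 0$ for all $k$ and all $\mu\in \Lambda'$, so there is nothing to prove beyond verifying that the hypotheses of Theorem \ref{thm:vanishing} apply in each of the two cases.

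For case (i), $\Si = \RP^2$: since $\chi(\RP^2)=1\neq 0$, the Klein-bottle caveat in Theorem \ref{thm:vanishing} is irrelevant, and part (i) of that theorem gives $e_\cG(\bN_\mu)=0$, and hence $\alpha^k_\ep=0$, for every $\mu\in\Lambda'$ and every $n>1$. This is exactly what we need.

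For case (ii), $\Si\not\cong\RP^2$ and $n\in\{2,3\}$: I would first recall that any $\mu\in\Lambda'$ labels a reduction of the structure group of $\tP\to\tSi$ to a block subgroup with block sizes $(n_1,\ldots,n_r)$ satisfying $n_1+\cdots+n_r=n$ and $r\geq 2$ (otherwise $\mu$ would label the open stratum $\cA_{ss}$). For $n\in\{2,3\}$, any such partition with $r\geq 2$ forces some $n_j=1$, so $\mu$ contains a rank $1$ factor. Part (ii) of Theorem \ref{thm:vanishing} therefore applies and yields $\alpha^k_\ep=0$ for all $k$ and all $\mu\in\Lambda'$. The additional hypothesis $n\leq 3$ imposed when $\Si$ is the Klein bottle is automatic here.

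Since the combinatorial observation is trivial and Theorem \ref{thm:vanishing} has already been stated, there is no genuine obstacle at this stage; all of the real content is absorbed into the vanishing of the equivariant Euler class, whose proof is handled separately in Section \ref{sec:euler}.
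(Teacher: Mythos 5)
Your proposal is correct and follows exactly the paper's route: the corollary is just Theorem \ref{thm:vanishing} combined with the elementary observation (already noted in the paper just before Section \ref{sec:euler}) that for $n=2$ or $3$, every $\mu\in\Lambda'$ corresponds to a partition of $n$ into at least two parts and hence contains a rank $1$ factor. Nothing further is needed, since the theorem itself already records that $e_\cG(\bN_\mu)=0$ implies $\alpha^k_\ep=0$.
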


Although local equivariant antiperfection does not imply
equivariant antiperfection, it is natural to ask if
equivariant $\bQ$-antiperfection holds in the cases listed in
Corollary \ref{thm:local-anti-perfect}.
\begin{nota}\label{notation}
Given a principal bundle $P$ over a connected,
closed, orientable or nonorientable surface, let
$\cA(P)$ denote the space of connections
on $P$, and let $\cN_0(P)$ denote
the space of flat connections on $P$.
Let $\cG(P)=\Aut(P)$ and $\cG_0(P)$
be the gauge group and the based gauge group,
respectively.
\end{nota}

Let $\Si$ be a closed, connected, nonorientable surface,
so that it is the connected sum of $m>0$ copies of
$\RP^2$. Then the topological type of a principal
$U(n)$-bundle $P\to \Si$ is classified by
$c_1(P) \in H^2(\Si;\bZ)\cong \bZ/2\bZ$.
Let $P^{n,+}_\Si$ and $P^{n,-}_\Si$ denote
the trivial ($c_1=0$ mod 2) and nontrivial
($c_1=1$ mod 2) principal $U(n)$-bundle over $\Si$,
and let $Q^n_\Si$ be a principal $SU(n)$-bundle
over $\Si$ (which must be topologically trivial).

We have
$$
\Hom(\pi_1(\Si), U(n))=\Hom(\pi_1(\Si), U(n))_{+1}
\cup \Hom(\pi_1(\Si),U(n))_{-1},
$$
where
$$
\Hom(\pi_1(\Si), U(n))_{\pm 1}   \cong \cN_0(P_\Si^{n,\pm})/\cG_0(P_\Si^{n,\pm}).
$$
We also have
$$
\Hom(\pi_1(\Si), SU(n)) \cong \cN_0(Q_\Si^n)/\cG_0(Q_\Si^n).
$$
When $m>1$, $\Hom(\pi_1(\Si), U(n))_{+1}$ and $\Hom(\pi_1(\Si), U(n))_{-1}$
are the two connected components
of $\Hom(\pi_1(\Si),U(n))$, and $\Hom(\pi_1(\Si), SU(n))$ is connected.
When $m=1$,
$$
\Hom(\pi_1(\RP^2), U(n))_{\pm 1}=\{ a\in U(n)\mid a^2 =I_n, \det(a)=\pm 1\}.
$$
$\Hom(\pi_1(\RP^2), U(n))_{+1}= \Hom(\pi_1(\RP^2), SU(n))$ is disconnected for $n\geq 2$,  and
$\Hom(\pi_1(\RP^2),U(n))_{-1}$ is disconnected for $n\geq 3$.

We derive the following result in Section \ref{sec:Utwo}.

\begin{thm}[equivariant Poincar\'{e} series, rank 2 case]\label{thm:Utwo}
Let $\Si$ be a connected, closed, nonorientable surface, and
let $\tg$ be the genus of the
orientable double cover $\tSi$ of $\Si$. Then
the stratifications \eqref{eqn:strata} on $\cA(P^{2,+}_\Si)$,
$\cA(P^{2,-}_\Si)$, and $\cA(Q^2_\Si)$ are equivariantly $\bQ$-antiperfect if and only if
the following  \eqref{eqn:Utwo-plus}, \eqref{eqn:Utwo-minus}, and
\eqref{eqn:SUtwo} hold, respectively:
\begin{equation}\label{eqn:Utwo-plus}
P_t^{U(2)}\left(\Hom(\pi_1(\Si), U(2))_{(-1)^{\tg} };\bQ\right)
=\frac{(1+t)^{\tg} }{(1-t^2)(1-t^4)}
( (1+t^3)^{\tg} + t^{\tg}(1+t)^{\tg} )
\end{equation}
\begin{equation}\label{eqn:Utwo-minus}
P_t^{U(2)}\left(\Hom(\pi_1(\Si), U(2))_{(-1)^{\tg+1}};\bQ\right)
=\frac{(1+t)^{\tg}}{(1-t^2)(1-t^4)} ( (1+t^3)^{\tg} +
t^{\tg+2}(1+t)^{\tg} )
\end{equation}
\begin{equation}\label{eqn:SUtwo}
P_t^{SU(2)}\left(\Hom(\pi_1(\Si), SU(2));\bQ\right)
=\begin{cases}
\displaystyle{\frac{(1+t^3)^{\tg} + t^{\tg}(1+t)^{\tg} }{1-t^4 } }, & \tg \textup{ is even},\\
\displaystyle{\frac{(1+t^3)^{\tg} + t^{\tg+2}(1+t)^{\tg}}{1-t^4} }, & \tg \textup{ is odd}.
\end{cases}
\end{equation}
\end{thm}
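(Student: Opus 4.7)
The plan is to apply condition A5 of Lemma \ref{lm:anti-perfect}: the stratification on $\cA(P)$ is equivariantly $\bQ$-antiperfect if and only if
\begin{equation*}
P_t^\cG(\cA_{ss};\bQ) \;=\; P_t^\cG(\cA;\bQ) + \sum_{\mu\in\Lambda'} t^{\lambda_\mu-1} P_t^\cG(\cA_\mu;\bQ).
\end{equation*}
Since $\Si$ is a nonorientable surface and the absolute minimum of the Yang-Mills functional is zero, $\cA_{ss}$ is the stable manifold of the space of flat connections $\cN_0(P)$, so that $P_t^\cG(\cA_{ss};\bQ)=P_t^\cG(\cN_0(P);\bQ)=P_t^G(\Hom(\pi_1(\Si),G)_P;\bQ)$. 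Thus the theorem reduces to identifying the right hand side with the closed forms in \eqref{eqn:Utwo-plus}--\eqref{eqn:SUtwo}.

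First I would enumerate the strata $\mu\in\Lambda'$. By the Atiyah-Bott classification applied to the orientable double cover $\tSi$ of genus $\tg$, each $\mu$ corresponds to an unstable Harder-Narasimhan reduction of $\tP$ to a Levi subgroup. In rank two this forces the Levi to be $U(1)\times U(1)\subset U(2)$ (or its intersection with $SU(2)$), indexed by a pair of integers $(k_1,k_2)$ with $k_1>k_2$ and $k_1+k_2=c_1(\tP)$ (respectively by a single integer $k>0$ in the $SU(2)$ case). For each such $\mu$ I would write down the Morse index $\lambda_\mu$ (which on the nonorientable surface is half of the corresponding index on $\tSi$, namely $\lambda_\mu=k_1-k_2+\tg-1$ in the $U(2)$ case) and the equivariant Poincaré series of the stratum, using the fact that $\cA_\mu$ is $\cG$-equivariantly homotopy equivalent to a homogeneous space over the classifying space of the gauge group of a reduction. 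These inputs are already available in \cite{HL1,HL2}, where the relevant $P_t^\cG(\cA_\mu;\bQ)$ and $P_t^\cG(\cA;\bQ)=P_t(B\cG;\bQ)$ are expressed in terms of $\tg$.

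Once the ingredients are assembled, the sum over $\mu\in\Lambda'$ becomes a geometric series in the integer parameter(s) labelling the reduction, weighted by the Morse indices. I would sum these series using the identity
\begin{equation*}
\sum_{k>0} t^{2k}=\frac{t^2}{1-t^2},
\end{equation*}
carefully splitting into cases by the parity of $c_1$ modulo $2$ and by the parity of $\tg$, which is where the difference between the $(-1)^{\tg}$ and $(-1)^{\tg+1}$ components of $\Hom(\pi_1(\Si),U(2))$ and between the even/odd $\tg$ cases for $SU(2)$ enters. After combining with $P_t^\cG(\cA;\bQ)$, a routine algebraic simplification should collapse the sum into the claimed closed forms. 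The if-and-only-if then follows because both sides of A5 are equal as formal power series precisely when antiperfection holds.

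The main obstacle is the bookkeeping in step three: keeping track of the exact indexing of strata (in particular the parity constraint $k_1+k_2\equiv c_1(\tP)\pmod{2}$ and its interaction with the sign label $\pm$), correctly halving the orientable Morse indices, and then matching the geometric-series simplification to the specific closed forms \eqref{eqn:Utwo-plus}--\eqref{eqn:SUtwo}. Once this algebraic identification goes through, the proof is complete; no further Morse-theoretic input is required beyond Lemma \ref{lm:anti-perfect} and the stratum-by-stratum data already computed in \cite{HL1,HL2}.
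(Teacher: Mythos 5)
Your proposal follows essentially the same route as the paper's own proof: apply condition A5 of Lemma \ref{lm:anti-perfect} to reduce antiperfection to the identity $P_t^\cG(\cA_{ss};\bQ)=P_t^\cG(\cA;\bQ)+\sum_{\mu\in\Lambda'}t^{\lambda_\mu-1}P_t^\cG(\cA_\mu;\bQ)$, feed in the Atiyah--Bott type enumeration (strata $(r,-r)$, codimension $2r+\tg-1$, equivariant Poincar\'{e} series from \cite{HL1,HL2}), split the geometric series over $r$ by parity to separate the $\pm$ components, and simplify. The only thing worth tightening is the statement of the parity constraint selecting which $(r,-r)$ appear in $\Lambda'$: it is not $k_1+k_2\equiv c_1(\tP)\pmod 2$ (which is vacuous since $c_1(\tP)=0$), but rather the condition from \cite{HL1} that $(r,-r)$ lies in $I_2^\pm(\Si)$ according to the parity of $n'\chi(\Si)+r$; this is the bookkeeping you flag, and once it is made precise the proof goes through exactly as you outline.
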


The formulas in Theorem \ref{thm:Utwo} have been proved by T. Baird \cite{B1}:
\begin{thm}[{Baird}]\label{thm:baird-two}
\eqref{eqn:Utwo-plus}, \eqref{eqn:Utwo-minus}, and \eqref{eqn:SUtwo}
hold for any $\tg \geq 0$.
\end{thm}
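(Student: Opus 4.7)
The plan is to prove \eqref{eqn:Utwo-plus}--\eqref{eqn:SUtwo} by computing $P_t^G(\Hom(\pi_1(\Si),G);\bQ)$ directly; by Theorem \ref{thm:Utwo} this is equivalent to establishing equivariant $\bQ$-antiperfection but avoids analyzing the kernels of the maps $\alpha^k$ globally. The natural vehicle is the orientable double cover $\pi:\tSi\to\Si$, which identifies $\Hom(\pi_1(\Si),G)$ with the fixed locus of an involution $\tau$ on $\Hom(\pi_1(\tSi),G)$; for $G=SU(2),U(2)$ the latter is a well-studied space whose $G$-equivariant rational cohomology follows from Atiyah--Bott's perfect Morse stratification on the Riemann surface $\tSi$.

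I would first reduce the $U(2)$ case to the $SU(2)$ case using the determinant fibration $\Hom(\pi_1(\Si),U(2))\to \Hom(\pi_1(\Si),U(1))$. The base is (essentially) an $\tg$-dimensional torus, its $U(1)$-equivariant rational cohomology contributes the prefactor $(1+t)^{\tg}/(1-t^2)$, and the fibers are representation varieties in $SU(2)$ twisted by the chosen central character. Keeping track of $c_1(P)\in H^2(\Si;\bZ)\cong\bZ/2\bZ$ as one moves along the base produces the parity split: characters whose total sign matches $(-1)^{\tg}$ lie in the trivial bundle component, accounting for the exchange of labels between \eqref{eqn:Utwo-plus} and \eqref{eqn:Utwo-minus} as $\tg$ changes parity.

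For $G=SU(2)$ I would work from the presentation $\pi_1(\Si)=\langle c_1,\ldots,c_m\mid c_1^2\cdots c_m^2=1\rangle$ with $m=\tg+1$, so that $\Hom(\pi_1(\Si),SU(2))$ is the zero fiber of the word map $w:SU(2)^m\to SU(2)$, $w(A_1,\ldots,A_m)=\prod A_i^2$. I would stratify by the subset $S\subseteq\{1,\ldots,m\}$ of indices with $A_j\in\{\pm I\}$: on the top stratum $S=\emptyset$ the differential of $w$ is surjective, since the squaring map is a submersion off the center of $SU(2)$, so this stratum is a smooth manifold whose $SU(2)$-equivariant Poincar\'e series is computable via a collapsing Serre spectral sequence, while deeper strata contribute products of lower-dimensional representation varieties with explicit degree shifts. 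Summing with a parity constraint, which tracks when $(-I)^{|S|}$ can satisfy $\prod A_i^2=I$, should yield the two cases in \eqref{eqn:SUtwo}.

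The main obstacle will be the combinatorial bookkeeping: organizing the contributions from all strata, their equivariant Poincar\'e series, the degree shifts, and the parity constraints so that the sum collapses exactly to the closed forms on the right-hand sides. A natural alternative that avoids this direct combinatorial sum is to upgrade Atiyah--Bott's Morse theory on $\cA(\tP)$ to a $(\cG\times\bZ/2)$-equivariant setting and apply Smith--Floyd localization to pass from $H^*_\cG(\Hom(\pi_1(\tSi),G);\bQ)$ to the cohomology of the $\tau$-fixed locus; but then verifying that the $\tau$-action on Atiyah--Bott's explicit generators behaves as needed is itself a delicate check that I expect to dominate the difficulty of the proof.
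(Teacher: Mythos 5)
First, a structural point: the paper does not prove Theorem~\ref{thm:baird-two} itself --- it is quoted from Baird~\cite{B1}, and the paper's own contribution is Theorem~\ref{thm:Utwo}, the equivalence of the formulas \eqref{eqn:Utwo-plus}--\eqref{eqn:SUtwo} with equivariant $\bQ$-antiperfection. So you are attempting to reprove an external result, which is legitimate, but there is no internal argument in the paper to compare against.

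Second, your primary route contains a genuine error. You assert that ``the squaring map is a submersion off the center of $SU(2)$'' and use this to conclude that the top stratum $S=\emptyset$ of $w^{-1}(I)$ is smooth. This is false. The differential of $g\mapsto g^2$ at $g$, after translation to the identity, is $1+\Ad(g)$, which fails to be surjective exactly when $\Ad(g)$ has eigenvalue $-1$, i.e.\ when $g$ has order $4$ (trace zero) --- not when $g$ is central. Concretely, identifying $SU(2)\cong S^3\subset\bH$ and writing $q=a+v$ with $a=\mathrm{Re}(q)$ and $v$ pure imaginary, one has $q^2=(2a^2-1)+2av$, whose differential drops to rank $1$ precisely on the $2$-sphere $\{a=0\}=\{q:q^2=-I\}$, which is disjoint from $\{\pm I\}$. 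Consequently the word map $w(A_1,\ldots,A_m)=\prod A_i^2$ fails to be a submersion at tuples such as $(A,\ldots,A)$ with $\mathrm{tr}(A)=0$; when $m$ is even one has $w(A,\ldots,A)=(-I)^m=I$, so this point lies in $w^{-1}(I)$ and in your open stratum $S=\emptyset$, which is therefore singular. The collapsing-spectral-sequence step does not get started, and the parity bookkeeping tied to membership in $\{\pm I\}$ is tracking the wrong locus: the relevant degenerate set is the trace-zero sphere, where $A_j^2=-I$. Your alternative route --- a $(\cG\times\bZ/2)$-equivariant refinement of Atiyah--Bott together with localization at the $\tau$-fixed locus --- is much closer to methods that actually succeed and to Baird's own treatment via the double cover, but as you acknowledge you do not carry it out, so the proposal as written does not establish the theorem.
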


From Theorem \ref{thm:Utwo} and Theorem \ref{thm:baird-two}, we conclude that
\begin{cor}
Let $\cA$ be the space of connections
on a principal $U(2)$-bundle or $SU(2)$-bundle over a connected, closed, nonorientable surface.
Then the Morse stratification \eqref{eqn:strata} on $\cA$ is equivariantly
$\bQ$-antiperfect.
\end{cor}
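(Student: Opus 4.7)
The plan is immediate: combine the biconditional of Theorem \ref{thm:Utwo} with the unconditional result of Theorem \ref{thm:baird-two}. Theorem \ref{thm:Utwo} asserts that equivariant $\bQ$-antiperfection of the Morse stratification on each of $\cA(P^{2,+}_\Si)$, $\cA(P^{2,-}_\Si)$, and $\cA(Q^2_\Si)$ is equivalent to the validity of \eqref{eqn:Utwo-plus}, \eqref{eqn:Utwo-minus}, and \eqref{eqn:SUtwo}, respectively. Theorem \ref{thm:baird-two} states that all three formulas hold for every $\tg \geq 0$. So the ``if'' direction of Theorem \ref{thm:Utwo} applied to each case yields the desired antiperfection.

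The one routine bookkeeping step is to verify that the three bundles $P^{2,+}_\Si$, $P^{2,-}_\Si$, and $Q^2_\Si$ exhaust the topological types of principal $U(2)$- and $SU(2)$-bundles over a connected, closed, nonorientable surface $\Si$. This is the content of the classification recalled in Notation \ref{notation} and the subsequent discussion: for $U(2)$, bundles are classified by $c_1 \in H^2(\Si;\bZ)\cong \bZ/2\bZ$, giving the two types labeled $\pm$; for $SU(2)$, every bundle over $\Si$ is topologically trivial, giving the single type $Q^2_\Si$.

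There is no real obstacle here since all the work has been done upstream: the hard direction of Theorem \ref{thm:Utwo} (showing equivalence of antiperfection with the explicit Poincar\'e series formulas) is derived in Section \ref{sec:Utwo} using the Morse inequalities of Lemma \ref{lm:Mneq} and the reformulation of antiperfection in Lemma \ref{lm:anti-perfect}, while the independent verification of the formulas themselves is supplied by Baird \cite{B1}. The corollary is thus a one-line synthesis of these two inputs.
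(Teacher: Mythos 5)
Your proposal is correct and is exactly the paper's argument: the corollary is stated immediately after the sentence ``From Theorem \ref{thm:Utwo} and Theorem \ref{thm:baird-two}, we conclude that\ldots'', so the synthesis of the biconditional in Theorem \ref{thm:Utwo} with Baird's verification in Theorem \ref{thm:baird-two} is precisely the intended proof. The bookkeeping remark about the classification of $U(2)$- and $SU(2)$-bundles is a harmless addition that does not change the argument.
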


We derive the following result in Section \ref{sec:Uthree}.

\begin{thm}[equivariant Poincar\'{e} series, rank 3 case]\label{thm:Uthree}
Let $\Si$ be a connected, closed, nonorientable surface, and let $\tg$ be the genus of the
orientable double cover $\tSi$ of $\Si$. Then the stratifications
\eqref{eqn:strata} on $\cA(P^{3,\pm}_\Si)$ and
$\cA(Q^3_\Si)$ are equivariantly $\bQ$-antiperfect if and only if
the following \eqref{eqn:Uthree} and \eqref{eqn:SUthree} hold, respectively.
\begin{equation}\label{eqn:Uthree}
\begin{aligned}
& P_t^{U(3)}\left(\Hom(\pi_1(\Si), U(3))_{+1};\bQ\right)
=P_t^{U(3)}\left(\Hom(\pi_1(\Si), U(3))_{-1};\bQ\right)\\
=& \frac{(1+t)^{\tg} } {(1-t^2)(1-t^4)(1-t^6)} \left(
(1+t^3)^{\tg}(1+t^5)^{\tg} + t^{3\tg}(1+t)^{2\tg} (1+t^2+t^4)\right)
\end{aligned}
\end{equation}
\begin{equation}\label{eqn:SUthree}
P_t^{SU(3)}\left(\Hom(\pi_1(\Si), SU(3) );\bQ\right)=
\frac{(1+t^3)^{\tg}(1+t^5)^{\tg} + t^{3\tg}(1+t)^{2\tg} (1+t^2+t^4)
} {(1-t^4)(1-t^6)}
\end{equation}
\end{thm}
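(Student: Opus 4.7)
The plan is to invoke Lemma \ref{lm:anti-perfect}, whose condition (A5) reformulates equivariant $\bQ$-antiperfection as the Poincar\'{e} series identity
$$
P_t^{\cG}(\cA_{ss}; \bQ) = P_t^{\cG}(\cA;\bQ) + \sum_{\mu \in \Lambda'} t^{\lambda_\mu - 1} P_t^\cG(\cA_\mu; \bQ).
$$
Because $\cA_{ss}$ deformation retracts $\cG$-equivariantly onto the space of flat connections $\cN_0$, the left hand side equals $P_t^G(\Hom(\pi_1(\Si), G)_P; \bQ)$ for the appropriate topological type $P$. Hence the ``if and only if'' assertion reduces to the statement that the closed forms on the right hand sides of \eqref{eqn:Uthree} and \eqref{eqn:SUthree} are precisely equal to $P_t^{\cG}(\cA;\bQ) + \sum_{\mu\in \Lambda'} t^{\lambda_\mu-1} P_t^{\cG}(\cA_\mu;\bQ)$, and this identity of power series is what I would establish directly.

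First I would catalogue the strata. For $G=U(3)$ the elements $\mu \in \Lambda'$ correspond to nontrivial $\tau$-compatible reductions of the pullback bundle $\pi^*P \to \tSi$ to a Levi subgroup, and the only partition types of $3$ are $(1,1,1)$, $(1,2)$ and $(2,1)$; each partition type carries an infinite family of ``degrees'' subject to a sum constraint determined by the topological type $P_\Si^{3,\pm}$. For $SU(3)$, the corresponding strata are the intersections with $SU(3)$. I would use the description of $\tau$-invariant Harder--Narasimhan type strata from \cite{HL1,HL2} to enumerate the admissible tuples $\mu$ and to record the Morse index $\lambda_\mu$, which, as on an orientable surface, grows linearly in the degree parameters.

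Next I would compute each $P_t^\cG(\cA_\mu; \bQ)$. Since $\cA_\mu$ is $\cG$-equivariantly weak-homotopy equivalent to $\cG/\cG_\mu$, one has $P_t^\cG(\cA_\mu;\bQ) = P_t(B\cG_\mu;\bQ)$, where $\cG_\mu$ is the subgroup of automorphisms of $\pi^*P \to \tSi$ preserving the reduction and commuting with $\tau$. The Poincar\'{e} series of these classifying spaces, as well as $P_t^\cG(\cA;\bQ) = P_t(B\cG(P);\bQ)$ itself, were computed in \cite{HL1,HL2} as explicit rational functions built out of $(1+t)^{\tg}$, $(1+t^3)^{\tg}$, $(1+t^5)^{\tg}$, and the $t^{2k}$-power factors coming from the stabilizer of the Klein bottle case. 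These serve as the building blocks for the summation.

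Finally I would substitute and sum. Grouping the contributions by partition type turns each infinite family into a geometric series in $t$ (the degree parameter), which can be summed in closed form; the remaining finite combinatorics combines the $(1,2)$ and $(2,1)$ contributions symmetrically and then combines them with the $(1,1,1)$ contribution and with $P_t^\cG(\cA;\bQ)$. The $SU(3)$ case then follows from the $U(3)$ case by dividing through by the contribution of the central $U(1)$ gauge factor, exactly as in the standard Atiyah--Bott reduction. The main obstacle will be the bookkeeping of the $(1,2)$ and $(2,1)$ strata: one must carefully track how the degree-parity constraints distinguish the topological types $P_\Si^{3,+}$ and $P_\Si^{3,-}$ and then verify that, after summation, the two answers coincide, explaining the equality of the two Poincar\'{e} series in \eqref{eqn:Uthree}. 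I expect this matching, together with telescoping the geometric series into the single quotient in \eqref{eqn:Uthree} and \eqref{eqn:SUthree}, to be the most delicate step.
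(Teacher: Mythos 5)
Your overall strategy is right: reformulate equivariant $\bQ$-antiperfection via condition (A5) of Lemma \ref{lm:anti-perfect}, identify $P_t^\cG(\cA_{ss};\bQ)$ with the equivariant Poincar\'e series of the representation variety, and then show that $P_t^\cG(\cA;\bQ) + \sum_{\mu\in\Lambda'} t^{\lambda_\mu-1}P_t^\cG(\cA_\mu;\bQ)$ sums to the closed-form rational expression. This is exactly the skeleton of the paper's argument. However, there is a genuine error in your enumeration of strata, and it matters.

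You claim the partition types $(1,2)$ and $(2,1)$ contribute families of strata. For the Yang--Mills stratification on a \emph{nonorientable} surface, the index set $\Lambda'$ consists of nontrivial Atiyah--Bott types $\mu \in I_{n,0}$ that are fixed by $\tau_0$, i.e.\ that satisfy $\mu_i = -\mu_{n+1-i}$ for all $i$. For $n=3$ this forces $\mu_2 = 0$ and $\mu_3 = -\mu_1$, so the only unstable types are $\mu = (r,0,-r)$ with $r\in\bZ_{>0}$ (partition type $(1,1,1)$), plus the semistable type $(0,0,0)$. There are \emph{no} $(1,2)$ or $(2,1)$ strata: a block of size $2$ adjacent to one of size $1$ cannot be symmetric under the reversal that $\tau_0$ imposes. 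Consequently your anticipated ``most delicate step'' --- tracking degree-parity constraints to distinguish $P^{3,+}_\Si$ from $P^{3,-}_\Si$ and then matching the $(1,2)$ and $(2,1)$ contributions --- simply does not arise. Because $n=3$ is odd, the sets $I_3^\pm(\Si)$ are empty, so $\Lambda' = \{(r,0,-r) : r>0\}$ is \emph{literally the same} for $\cA(P^{3,+}_\Si)$ and $\cA(P^{3,-}_\Si)$; the equality of the two Poincar\'e series in \eqref{eqn:Uthree} is immediate, not delicate. The actual computation is a single geometric series in $r$ (with codimension $\lambda_{r,0,-r}=4r+3(\tg-1)$ and each $P_t^\cG(\cA_{r,0,-r};\bQ)=(1+t)^{3\tg}/(1-t^2)^2$, independent of $r$), added to $P_t(B\cG;\bQ)$. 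The $SU(3)$ case is handled by a parallel direct computation over the same index set $I_3^0$, which is consistent with (though not literally derived from) your ``divide by the central $U(1)$'' heuristic.
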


Motivated by Theorem \ref{thm:vanishing} and
Theorem \ref{thm:Uthree}, we make the following
conjecture.
\begin{con}\label{conjecture}
\eqref{eqn:Uthree} and \eqref{eqn:SUthree} hold for any $\tg\geq 0$.
\end{con}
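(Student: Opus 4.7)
The conjecture asserts the closed-form identities \eqref{eqn:Uthree} and \eqref{eqn:SUthree} for every $\tg \ge 0$. By Theorem \ref{thm:Uthree}, this is equivalent to global equivariant $\bQ$-antiperfection of the Yang-Mills Morse stratification \eqref{eqn:strata} on $\cA(P^{3,\pm}_\Si)$ and $\cA(Q^3_\Si)$. Local equivariant $\bQ$-antiperfection ($\alpha^k_\epsilon = 0$) is already in hand by Corollary \ref{thm:local-anti-perfect}, so the task is to upgrade $\alpha^k_\epsilon = 0$ to $\alpha^k = 0$ in Diagram \eqref{eqn:diagram} for every nonopen stratum $\mu \in \Lambda'$. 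Since $\Ker(\alpha^k) \subset \Ker(\alpha^k_\epsilon)$ is only an inclusion, this upgrade is not formal and requires either a global cohomology chase or an independent computation of the Poincar\'e series of the open stratum.

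The more tractable route is to compute $P_t^{\cG}(\cA_{ss}; \bQ) = P_t^G(\Hom(\pi_1(\Si), G); \bQ)$ directly and verify that it matches the right hand side of condition A5 of Lemma \ref{lm:anti-perfect}; combined with the one-sided Morse inequality in Lemma \ref{lm:Mneq}, equality forces antiperfection. The inputs on the A5 side are $P_t^\cG(\cA;\bQ)=P_t(B\cG;\bQ)$, known from \cite{HL1, HL2}, together with $P_t^\cG(\cA_\mu;\bQ)$ for each $\mu \in \Lambda'$. In rank $3$ every $\mu$ corresponds to a reduction to $U(n_1)\times\cdots\times U(n_r)$ with $\sum n_i=3$ and therefore contains at least one rank $1$ factor; the non-$U(1)$ factor is of rank $\le 2$, so the computation of $P_t^\cG(\cA_\mu;\bQ)$ reduces to the rank $\le 2$ case already completed in Theorem \ref{thm:baird-two}.

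The missing ingredient is then an independent description of $\Hom(\pi_1(\Si), G)$. For $\Si=\RP^2$ this space is the variety of involutions $\{g\in G\mid g^2=e\}$, a finite union of conjugacy classes $G/H$ with $H$ block-diagonal, whose $G$-equivariant cohomology is a direct calculation and provides a base case. For $m > 1$, following Baird \cite{B1}, one identifies $\Hom(\pi_1(\Si), G)_{\pm}$ with the fixed locus of the anti-holomorphic involution $\tau$ on $\Hom(\pi_1(\tSi), G)$, whose $G$-equivariant Poincar\'e series is to be extracted from a $\tau$-invariant Morse function on the representation variety of $\tSi$. Matching the resulting series with the prediction from A5 closes the argument.

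The hard part is this last fixed-point analysis. In rank $2$ the components of $\Hom(\pi_1(\tSi), U(2))^\tau$ admit transparent parametrizations; in rank $3$ the stabilizer structure is richer, there are more components of the fixed locus, and the combinatorial cancellations needed to collapse everything into the compact form \eqref{eqn:Uthree} are delicate --- this is exactly the step carried out by Baird \cite{B3}. The $SU(3)$ case \eqref{eqn:SUthree} is likely the most stubborn, since the central $U(1)$ quotient that simplifies $U(3)$ is unavailable; a natural way around this is to relate $SU(3)$-antiperfection to the already-established $U(3)$-antiperfection via the determinant fibration $SU(3)\hookrightarrow U(3) \to U(1)$ and the resulting Leray--Hirsch-type decomposition of the equivariant cohomology of the Hom space.
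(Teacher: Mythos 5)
This statement is a \emph{conjecture}, and the paper offers no proof of it: the authors establish only that (a) the stratification is \emph{locally} equivariantly $\bQ$-antiperfect (Corollary \ref{thm:local-anti-perfect}, via the Euler class vanishing in Theorem \ref{thm:vanishing}), and (b) the conjectural formulas \eqref{eqn:Uthree}, \eqref{eqn:SUthree} are \emph{equivalent} to global antiperfection (Theorem \ref{thm:Uthree}). The gap between local and global is exactly what neither the paper nor your proposal bridges; the paper explicitly defers to Baird \cite{B3} for the $U(3)$ case, and you do the same. So your submission is an honest strategy sketch, not a proof, which is consistent with the paper's own (non)treatment of the statement. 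Your logic that an independent computation of $P_t^G(\Hom(\pi_1(\Si),G);\bQ)$ matching the right side of A5 forces antiperfection is correct (A5 is one of the equivalent conditions in Lemma \ref{lm:anti-perfect}), and your observation that the $SU(3)$ case lacks the convenient central $U(1)$ reduction is a genuine point.

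One concrete inaccuracy: you write that the nonopen strata in rank $3$ contain a non-$U(1)$ factor of rank $\le 2$ and so the computation of $P_t^\cG(\cA_\mu;\bQ)$ ``reduces to the rank $\le 2$ case already completed in Theorem \ref{thm:baird-two}.'' This is off on two counts. First, the $\tau_0$-fixed Atiyah--Bott types in rank $3$ are exactly $I_3 = \{(0,0,0)\}\cup\{(r,0,-r)\mid r>0\}$, so every $\mu\in\Lambda'$ decomposes into \emph{rank $1$ factors only}; there are no rank $2$ factors at all, and the equivariant Poincar\'e series of these strata are computed elementarily in the proof of Theorem \ref{thm:Uthree} (they equal $(1+t)^{3\tg}/(1-t^2)^2$) without any appeal to Baird's rank $2$ result. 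Second, Theorem \ref{thm:baird-two} concerns the \emph{open} stratum $\cA_{ss}$ of the rank $2$ problem (i.e., the flat connections), which is not one of the nonopen strata appearing on the right side of A5 in the rank $3$ problem. The quantity you actually need to compute independently is the left side, $P_t^\cG(\cA_{ss};\bQ)=P_t^G(\Hom(\pi_1(\Si),G);\bQ)$ for $G=U(3)$ or $SU(3)$ --- and that is precisely the work that remains and is not done here.
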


During the revision of this paper, T. Baird showed
that the stratifications \eqref{eqn:strata} on $\cA(P^{n,+}_\Si)$
and $\cA(P^{n,-}_\Si)$ are
equivariantly $\bQ$-antiperfect
for $n=3$, but not equivariantly $\bQ$-antiperfect
for $n\geq 4$ \cite{B3}. Therefore
\eqref{eqn:Uthree} holds for any $\tg\geq 0$.
 Using a different approach, Baird verified our conjectural
 rank 3 formulas \eqref{eqn:Uthree}  and \eqref{eqn:SUthree} when 
$\Si$ is the real projective plane ($\tg=0$) or the Klein bottle ($\tg=1$) \cite{B2}.

\section{Equivariant Euler Class}\label{sec:euler}

Let $\Si$ be a connected, closed, nonorientable surface, so that it
is the connected sum  of $m>0$ copies of $\RP^2$, and let
$\pi:\tSi\to \Si$ be the orientable double cover, so that $\tSi$ is
a Riemann surface of genus $\tg=m-1$. Let $P^{n,+}_\Si$ and
$P^{n,-}_\Si$ be defined as in Section \ref{sec:main}, and let
$P^{n,k}_\tSi$ denote the degree $k$ principal $U(n)$-bundle on
$\tSi$. Then $\pi^*P^{n,\pm }_\Si\cong P^{n,0}_\tSi\cong U(n)\times
\tSi$ is a trivial $U(n)$-bundle over $\tSi$.

Let $\cA(P)$, $\cN_0(P)$, $\cG(P)$, and $\cG_0(P)$ be
defined as in Notation \ref{notation}.
There is an inclusion $\cA(P^{n,\pm}_\Si)\hookrightarrow
\cA(P^{n,0}_\tSi)$ defined by $A\mapsto \pi^*A$, and
the image is the fixed locus of
an anti-symplectic, anti-holomorphic
involution $\tau^\pm$ on $\cA(P^{n,0}_\tSi)$.
The Yang-Mills functional on $\cA(P^{n,\pm}_\Si)$ is, by
definition, restriction of the Yang-Mills functional on
$\cA(P^{n,0}_\tSi)$ to
$\cA(P^{n,0}_\tSi)^{\tau^\pm}$. The Yang-Mills functional on
$\cA(P^{n,0}_\tSi)$ and the metric on $\cA(P^{n,0}_\tSi)$ are
invariant under the involutions $\tau^+$, $\tau^-$. The
Morse strata of $\cA(P^{n,\pm}_\Si)$ are of the form $\cA_\mu
=\tilde{\cA}_\mu \cap \cA(P^{n,0}_\tSi)^{\tau^\pm}$, where
$\tilde{\cA}_\mu$ is a Morse stratum of $\cA(P^{n,0}_\tSi)$.
The Yang-Mills functional is invariant under the action of the gauge group, and
each Morse stratum is preserved by the action of the gauge group.
Since the arguments for $\cA(P^{n,-}_\Si)$ and $\cA(P^{n,+}_\Si)$
are the same, we will use the notation $\cA$ instead of
$\cA(P^{n,\pm}_\Si)$ when there is no confusion.

\subsection{Atiyah-Bott types}\label{sec:ABtypes}
The Morse strata on $\cA(P^{n,k}_\tSi)$ are labeled by the
Atiyah-Bott types $\mu\in I_{n,k}$, where
\begin{eqnarray*}
I_{n,k}&=&\Bigl \{ \mu= (\mu_1,\ldots,\mu_n)=  \Bigl(
\underbrace{\frac{k_1}{n_1},\ldots,\frac{k_1}{n_1}}_{n_1},\ldots,
\underbrace{\frac{k_m}{n_m},\ldots,\frac{k_m}{n_m}}_{n_m}\Bigr )\Bigr | \\
&& \Bigl. n_j\in \bZ_{>0}, k_j\in \bZ, \sum_{j=1}^m n_j= n,
\sum_{j=1}^m k_j=k, \frac{k_1}{n_1} >\cdots >\frac{k_m}{n_m} \Bigr
\}
\end{eqnarray*}
The Morse stratification on $\cA(P^{n,k}_\tSi)$ is given by
$$
\cA(P^{n,k}_\tSi)=\bigcup_{\mu\in I_{n,k}} \tilde{\cA}_\mu.
$$
The unique open stratum is
$$
\cA(P^{n,k}_\tSi)_{ss}=\tilde{\cA}_{\kn}.
$$
The partial ordering on $I_{n,k}$ is given by
$$
\mu\geq\nu \quad
\mbox{iff} \quad \sum_{j\leq i}\mu_j\geq\sum_{j\leq i}\nu_j,
\quad\forall~ i=1,\ldots,n-1.$$

The involution $\tau^\pm$ acts on strata by
$\cA_\mu\mapsto \cA_{\tau_0(\mu)}$, where
$$
\tau_0:I_{n,0}\to I_{n,0},\quad
(\mu_1,\ldots,\mu_n)\mapsto (-\mu_n,\ldots,-\mu_1).
$$
Using the same notation as in
\cite[Section 7.1]{HL1}, denote $I_n=I_{n,0}^{\tau_0}$ the fixed
point set of $\tau_0$ on $I_{n,0}$. Then any $\mu\in I_n$ is of
the form
\begin{equation}\label{eqn:mu}
\mu=\Bigl( \underbrace{\frac{k_1}{n_1},\ldots,
\frac{k_1}{n_1}}_{n_1},\ldots,
 \underbrace{\frac{k_r}{n_r},\ldots, \frac{k_r}{n_r}}_{n_r}
,\underbrace{0,\ldots,0}_{n_0},
 \underbrace{-\frac{k_r}{n_r},\ldots, -\frac{k_r}{n_r}}_{n_r},
\ldots,
 \underbrace{-\frac{k_1}{n_1},\ldots, -\frac{k_1}{n_1}}_{n_1}\Bigr)
\end{equation}
where
$$
\frac{k_1}{n_1}>\cdots > \frac{k_r}{n_r} >0, \quad n_0\geq 0,\quad
n_i>0, \quad 2(n_1+\cdots+n_r)+n_0=n,
$$
Define
$$
I_n^0=\{ \mu\in I_n\mid \mu_i=0 \textup{ for some }i \}.
$$
For $\mu\in I_n^0$, $\tilde{\cA}_\mu$
intersects both $\cA(P^{n,0}_\tSi)^{\tau^+}$
and $\cA(P^{n,0}_\tSi)^{\tau^-}$. Note that
$I_n=I_n^0$ when $n$ is odd.

When $n=2n'$ is even,
any $\mu\in I_n\setminus I_n^0$ is of the form
\begin{equation}\label{eqn:I-pm}
\mu=(\nu,\tau_0(\nu)), \quad \nu\in I_{n',k},\quad \nu_1>\ldots>\nu_{n'}>0.
\end{equation}
By \cite[Section 7.1]{HL1}, $\tilde{\cA}_\mu$ intersect
$\cA(P^{n,0}_\tSi)^{\tau^+}$ (resp. $\cA(P^{n,0}_\tSi)^{\tau^-}$) if
and only if $n'\chi(\Si) + k$ is even (resp. odd). Here $\chi(\Si)$ is the
Euler characteristic of the nonorientable surface $\Si$; if $\Si$ is
the connected sum of $m$ copies of $\RP^2$, then $\chi(\Si)=2-m$.

When $n=2n'$ is even, we define
$$
I_n^{\pm}(\Si)
=\{ (\nu,\tau_0(\nu)) \in I_n\setminus I_n^0\mid
\nu\in I_{n',k}, (-1)^{n'\chi(\Si)+k}= \pm 1\}.
$$
When $n$ is odd, we define
$I_n^\pm(\Si)$ to be empty sets.
Then
$$
\cA(P^{n,\pm}_\Si)=\bigcup_{\mu\in I_n^0\cup I_n^{\pm}(\Si)}\cA_\mu.
$$

By the discussion in \cite[Seciton 3.3]{HLR}, there is an inclusion
$\iota:\cA(Q^n_\Si)\hookrightarrow \cA(P^{n,+}_\Si)$, and the Morse
stratification on $\cA(Q^n_\Si)$ is given by
$$
\cA(Q^n_\Si) =\bigcup_{\mu\in I_n^0\cup I_n^+(\Si)} \cA'_\mu
$$
where $\cA'_\mu = \cA_\mu \cap \cA(Q^n_\Si)$.  Let
$$
\cG'=\Aut(Q^n_\Si)=\Map(\Si, SU(n)),\quad
\cG=\Aut(P^{n,+}_\Si) =\Map(\Si,U(n)),
$$
and let $\bN_\mu$ (resp. $\bN_\mu'$) be the normal
bundle of $\cA_\mu$ (resp. $\cA_\mu'$)
in $\cA(P^{n,+}_\Si)$ (resp. $\cA(Q^n_\Si)$).
Then there are continuous maps
$$
(\cA_\mu')_{h\cG'} \stackrel{\iota_\mu}{\hookrightarrow}
(\cA_\mu)_{h\cG'} \stackrel{q_\mu}{\to}(\cA_\mu)_{h\cG}
$$
and the vector bundle
$(\bN_\mu')_{h\cG'}$ over $(\cA'_\mu)_{h\cG'}$
is the pullback of the vector bundle $(\bN_\mu)_{h\cG}$ over
$(\cA_\mu)_{h\cG}$ under $q_\mu\circ \iota_\mu$. So if
$e_\cG(\bN_\mu)=0$ then
$e_{\cG'}(\bN_\mu')=0$.  Therefore, to prove the vanishing
of the equivariant Euler class (Theorem \ref{thm:vanishing}),
it suffices to consider the $U(n)$ case.

\subsection{Decomposition of the normal bundle}
Let $E=P^{n,k}_\tSi \times_\rho \bC^n$ be the complex vector bundle
associated to the fundamental representation $\rho:U(n)\to
GL(n,\bC)$. Then $E\to \tSi$ is a rank $n$, degree $k$ complex
vector bundle equipped with a Hermitian metric $h$, and
$\cA(P^{n,k}_\tSi)$ can be identified with $\cA(E,h)$, the space of
Hermitian connections on $(E,h)$ (i.e. connections on $E$ that are
compatible with the Hermitian metric $h$). Let $\cC(E)$ denote the
space of holomorphic structures on $E$. Then there is an isomorphism
$\cA(P^{n,k}_\tSi)\stackrel{\cong}{\to} \cC(E)$ of complex affine
spaces, given by $\nabla\mapsto \nabla^{0,1}$. Let $\cE$ denote $E$
equipped with a $(0,1)$-connection (holomorphic structure), so that
$\cE$ can be viewed as a point in $\cC(E)$ and thus a point in
$\cA(P^{n,k}_\tSi)$.

Let $\mu\in I_n^0\cup I^\pm_n(\Si)$ be as in \eqref{eqn:mu}, so that
$\cA_\mu$ is a stratum of $\cA(P^{n,\pm}_\Si)$, and
$$
\cA_\mu = \tilde{\cA}_\mu \cap \cA(P^{n,0}_\tSi)^{\tau^\pm}
$$
where $\tilde{\cA}_\mu$ is the corresponding stratum
of $\cA(P^{n,0}_\tSi)$ labeled by the same Atiyah-Bott type $\mu$.
Let $\cN_\mu$ be the critical set of $\cA_\mu$, and let
$i:\cN_\mu\hookrightarrow \cA_\mu$ be the inclusion map.
There is a gauge equivariant deformation retraction
$r: \cA_\mu \to \cN_\mu$, so
$e_\cG(\bN_\mu)=0$ if and only if $e_{\cG}(i^*\bN_\mu)=0$.
We have the following equivalences
of equivariant pairs:
\begin{eqnarray*}
&&(\cA_\mu,\cG(P^{n,\pm}_\Si))\sim
(\cN_\mu, \cG(P^{n,\pm}_\Si))\\
&& \sim \left(\cN_{0}(P^{n_0,\pm}_\Si), \cG(P^{n_0,\pm}_\Si)\right)\times
\prod_{j=1}^r \left(\cN_{ss}(P^{n_j,k_j}_\tSi),
\cG(P^{n_j,k_j}_\tSi)\right).
\end{eqnarray*}
When $\mu\in I^0_n$ so that $n_0>0$, the parity of
$P^{n_0,\pm}_\Si$ can either agree or disagree with
that of $P^{n,\pm}_\Si$.

A point in $\cN_\mu$ corresponds to a holomorphic
vector bundle $\cE$ of the form
$$
\cE= \D_1 \oplus  \cdots \oplus \D_r \oplus \D_0 \oplus
\tau_\cC(\D_r)\oplus \cdots \oplus \tau_\cC(\D_1)
$$
where $\D_j$ is a degree $k_j$, rank $n_j$ polystable vector bundle,
$\D_0$ is a degree 0, rank $n_0$ polystable vector bundle,
$\tau_\cC(\D_j)=\tau^*\overline{\D^\vee_j}$  and $\tau_\cC(\D_0)
\cong\D_0$ (see \cite[Section 3]{HLR} for more details).

Let $\tilde{\bN}_\mu$ be the normal bundle of $\tilde{\cA}_\mu$ in
$\cA(P^{n,0}_\tSi)$.
 Then the fiber of $\tilde{\bN}_\mu$ at $\cE$ is
\begin{eqnarray*}
&& (\tilde{\bN}_\mu)_\cE = H^1(\tSi,\End''(\cE))\\
&=& \bigoplus_{0<i<j} H^1\left(\tSi, \Hom(\D_i,\D_j)\right) \oplus
\bigoplus_{0<i<j} H^1\left(\tSi, \Hom(\tau_\cC(\D_j), \tau_\cC(\D_i)\right) \\
&& \oplus \bigoplus_{0<i,j}H^1\left(\tSi, \Hom(\D_i, \tau_\cC(\D_j))\right)\\
&& \oplus \bigoplus_{i>0}H^1\left(\tSi,\Hom(\D_i,\D_0)\right) \oplus
\bigoplus_{i>0}H^1\left(\tSi, \Hom(\D_0, \tau_\cC(\D_i))\right).
\end{eqnarray*}

As explained in \cite[Section 4.1]{HLR}, $\tau$ induces
conjugate linear maps of complex vector spaces:
\begin{eqnarray*}
H^1(\tSi,\Hom(\D_i,\D_j)) & \to&  H^1(\tSi, \Hom(\tau_\cC(\D_j),
\tau_\cC(\D_i))),
\textup{ and its inverse},\\
H^1(\tSi,\Hom(\D_i, \tau_\cC(\D_j)) & \to & H^1(\tSi, \Hom(\D_j, \tau_\cC(\D_i)),\\
H^1(\tSi,\Hom(\D_i, \D_0))&\to &H^1(\tSi,
\Hom(\D_0,\tau_\cC(\D_i))), \textup{ and its inverse}.
\end{eqnarray*}

Let $\bN_\mu$ be the normal bundle of $\cA_\mu$ in
$\cA(P^{n,\pm}_\Si)=\cA(P^{n,0}_\tSi)^{\tau^\pm}$. Then the
fiber of $\bN_\mu$ at $\cE$ is
\begin{equation}\label{eqn:Nmu}
\begin{aligned}
& (\bN_\mu)_\cE = H^1(\tSi,\End''(\cE))^\tau\\
\cong & \bigoplus_{0<i<j} H^1\left(\tSi, \Hom(\D_i,\D_j)\right)
\oplus
\bigoplus_{0<i<j} H^1\left(\tSi,\Hom(\D_i,\tau_\cC(\D_j))\right) \\
& \oplus
\bigoplus_{j>0}H^1\left(\tSi,\Hom(\D_j,\D_0)\right)
\oplus \bigoplus_{j>0}H^1\left(\tSi, \Hom(\D_j,
\tau_\cC(\D_j))\right)^\tau
\end{aligned}
\end{equation}

Therefore   $i^*\bN_\mu = \bN_\mu^\bC \oplus \bN_\mu^\bR$, where
\begin{eqnarray*}
(\bN_\mu^\bC)_\cE &=& \bigoplus_{0<i<j} H^1\Bigl(\tSi,\Hom(\D_i,
\D_j)\Bigr)
\oplus \bigoplus_{0<i<j} H^1\Bigl(\tSi,\Hom(\D_i, \tau_\cC(\D_j)\Bigr)\\
&& \oplus \bigoplus_{j>0} H^1\Bigl(\tSi,\Hom(\D_j, \D_0)\Bigr),\\
(\bN_\mu^\bR)_\cE &=&  \bigoplus_{j>0} H^1\Bigl(\tSi,\Hom(\D_j,
\tau_\cC(\D_j))\Bigr)^\tau.
\end{eqnarray*}
Note that $\bN_\mu^\bC$ is a complex vector bundle over $\cN_\mu$ and
$\bN_\mu^\bR$ is a real vector bundle over $\cN_\mu$.  We have
\begin{equation}\label{eqn:eee}
e_\cG(i^*\bN_\mu)=e_\cG(\bN_\mu^\bC)\cup  e_\cG(\bN_\mu^\bR).
\end{equation}
Let
$$
\lambda_\mu =\rank_\bR \bN_\mu,\quad \lambda_\mu^\bC=\rank_\bC \bN_\mu^\bC,\quad
\lambda_\mu^\bR=\rank_\bR\bN_\mu^\bR.
$$
Then
$$
\lambda_\mu =2 \lambda_\mu^\bC +\lambda_\mu ^\bR.
$$
\begin{lm}\label{lemma:ENC}
Let $K=\bQ$ or $K=\bZ_p$ ($p$ any prime). Then
$e_\cG(\bN_\mu^\bC)$ is not a zero divisor in $H^*_\cG(\cN_\mu;K)$.
\end{lm}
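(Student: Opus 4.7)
The plan is to exploit the direct sum decomposition of $\bN_\mu^\bC$ given in the excerpt to factor its equivariant Euler class into pieces, each of which can be handled by an Atiyah-Bott style argument applied to the orientable double cover $\tSi$. Specifically, writing
\[
\bN_\mu^\bC = \bigoplus_{0<i<j}\bN_{i,j}^{(1)} \oplus \bigoplus_{0<i<j}\bN_{i,j}^{(2)} \oplus \bigoplus_{j>0}\bN_{j,0},
\]
where the fibers of $\bN_{i,j}^{(1)}$, $\bN_{i,j}^{(2)}$, $\bN_{j,0}$ at $\cE$ are $H^1(\tSi,\Hom(\D_i,\D_j))$, $H^1(\tSi,\Hom(\D_i,\tau_\cC(\D_j)))$, $H^1(\tSi,\Hom(\D_j,\D_0))$ respectively, one has $e_\cG(\bN_\mu^\bC)=\prod e_\cG(\bN_{i,j}^{(1)})\prod e_\cG(\bN_{i,j}^{(2)})\prod e_\cG(\bN_{j,0})$. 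The key observation is that in every case the slope inequality $\mu(\D_a)>\mu(\D_b)$ holds (for $\bN_{j,0}$ because $k_j/n_j>0=\mu(\D_0)=\mu(\pi^*\D_0)$, and for $\bN_{i,j}^{(2)}$ because $k_i/n_i>0>-k_j/n_j=\mu(\tau_\cC(\D_j))$), so by a Riemann--Roch/vanishing argument $H^0(\tSi,\Hom(\cdot,\cdot))=0$ on each fiber and the pieces are genuine complex vector bundles.

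Next I would apply the Atiyah--Bott argument from \cite{ym}. For each summand the Euler class is expressed, via an Atiyah--Singer family index computation, as an explicit polynomial in the universal equivariant Chern classes of the two universal bundles on $\tSi\times\cN_\mu$; Atiyah--Bott showed that when the slope inequality is strict this polynomial is a non-zero-divisor in the Künneth tensor product of the equivariant cohomology rings of the two factor moduli (essentially because the relevant equivariant cohomology is a polynomial algebra tensored with an exterior algebra, and the Euler class has a distinguished top-degree component that is a regular element). For the summands $\bN_{i,j}^{(1)}$ and $\bN_{i,j}^{(2)}$, both ``factors'' live on $\tSi$, so this is a direct application of the Atiyah--Bott result; for $\bN_{j,0}$ one treats $\pi^*\D_0$ as a family of degree-$0$ semistable bundles on $\tSi$ parametrized by $\cN_0(P^{n_0,\pm}_\Si)$ and runs the same argument, using that $H^*_{\cG(P^{n_0,\pm}_\Si)}(\cN_0(P^{n_0,\pm}_\Si);K)$ admits a polynomial-plus-exterior presentation compatible with pullback.

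Finally I would assemble the factors. Since $H^*_\cG(\cN_\mu;K)$ is, by Künneth with field coefficients, the tensor product of the equivariant cohomologies of the $r+1$ factor moduli, and in each tensor factor the relevant Euler class is a non-zero-divisor by the previous paragraph, a standard fact about tensor products of graded-commutative algebras over a field (a tensor product of non-zero-divisors is a non-zero-divisor) concludes that $e_\cG(\bN_\mu^\bC)$ is a non-zero-divisor in $H^*_\cG(\cN_\mu;K)$.

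The main obstacle will be the $\bN_{j,0}$ summand: unlike the classical Atiyah--Bott setting, the parameter space is a space of flat connections over a nonorientable surface rather than a moduli of polystable bundles over a Riemann surface, so one must verify that the Atiyah--Bott polynomial-plus-exterior structure, and in particular the regularity of the Euler class, still applies after pulling back via $\pi$. Once the structure of $H^*_{\cG(P^{n_0,\pm}_\Si)}(\cN_0(P^{n_0,\pm}_\Si);K)$ is identified (either directly or through $\pi^*$), the remaining steps are parallel to \cite{ym}.
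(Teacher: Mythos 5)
Your proposal takes a genuinely different route from the paper, and it has a gap that you yourself flag; the paper's argument is much shorter and neatly sidesteps precisely that gap. The paper does not try to identify the ring structure of $H^*_\cG(\cN_\mu;K)$ or to run a family-index / polynomial-plus-exterior analysis summand by summand. Instead it picks out the torus $T^r=U(1)_1\times\cdots\times U(1)_r$ of \emph{constant central} gauge transformations sitting inside the factors $\cG(P^{n_j,k_j}_\tSi)$, $j=1,\dots,r$ (note: not inside $\cG(P^{n_0,\pm}_\Si)$). This $T^r$ is central in $\cG$, acts trivially on all of $\cN_\mu$, and acts on the three types of summands in $\bN_\mu^\bC$ with weights $t_j t_i^{-1}$, $t_j t_i^{-1}$, and $t_j^{-1}$ respectively, all of which are nontrivial characters of $T^r$. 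Thus the fiber representation is primitive, and the conclusion follows from Atiyah--Bott's Proposition 13.4 in one stroke.

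The advantage of this over your approach is exactly the point you identify as the ``main obstacle'': your argument has to know something nontrivial about $H^*_{\cG(P^{n_0,\pm}_\Si)}(\cN_0(P^{n_0,\pm}_\Si);K)$ in order to handle the $\bN_{j,0}$ summands, because the parameter space there lives over the nonorientable surface $\Si$ rather than over $\tSi$, and Atiyah--Bott's structural (polynomial $\otimes$ exterior) description is proved for Riemann surfaces, not nonorientable ones. You leave this unverified, so as written the proof has a genuine gap. The torus argument removes the issue entirely: $T^r$ does not involve the $n_0$-factor's gauge group, so no knowledge of that factor's equivariant cohomology is needed --- only that $T^r$ acts trivially on the base and by nonzero weights on the fibers. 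A secondary, smaller imprecision: ``a tensor product of non-zero-divisors is a non-zero-divisor'' is not the fact you need, because the Euler classes of different summands do not live in disjoint tensor factors of the K\"unneth decomposition (e.g. $e(\bN^{(1)}_{1,2})$ and $e(\bN^{(1)}_{2,3})$ both involve the second factor). What one really needs is (a) that a non-zero-divisor $e\in A$ remains one after the flat base change $A\to A\otimes_K C$, and (b) that a product of non-zero-divisors in a fixed ring is a non-zero-divisor; these repair that step, but the $\bN_{j,0}$ gap remains. I would recommend replacing the whole argument with the restriction-to-$T^r$ weight computation.
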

\begin{proof}
Let $U(1)_j$ be the center of $U(n_j)$, the group of {\em constant}
gauge transformation on $P^{n_j,k_j}_\tSi$. Let $T^r = U(1)_1 \times
\cdots U(1)_r$. Then $T^r\subset \cG(P^{n_0,\pm}_\Si)\times
\prod_{j=1}^r \cG(P^{n_j,k_j}_\tSi)$ acts trivially on
$\cN_{0}(P^{n_0,\pm}_\Si)\times \prod_{j=1}^r
\cN_{ss}(P^{n_j,k_j}_\tSi)$, and the weights of the $T^r$-action on
$\bN_\mu^\bC$ are given by
\begin{eqnarray*}
t_j t_i^{-1} && \mbox{on} \quad H^1\Bigl(\tSi,\Hom(\D_i, \D_j)\Bigr), \quad i<j,  \\
t_j t_i^{-1}&& \mbox{on} \quad H^1\Bigl(\tSi,\Hom(\D_i, \tau_\cC(\D_j)\Bigr), \quad i<j,\\
\mbox{and}\quad\quad t_j^{-1} && \mbox{on} \quad
H^1\Bigl(\tSi,\Hom(\D_j, \D_0)\Bigr),\quad j>0.
\end{eqnarray*} where $(t_1,\cdots,t_r)\in T^r$ (cf: \cite[p.569]{ym}).
So the representation of $T^r$ on the fiber of $\bN_\mu^\bC$ is primitive.
By \cite[Proposition 13.4]{ym}), $e_\cG(\bN_\mu^\bC)$ is not a zero divisor
in $H^*_\cG(\cN_\mu;K)$.
\end{proof}

By \eqref{eqn:eee} and Lemma \ref{lemma:ENC}, $e_{\cG}(i^* \bN_\mu)=0$
if and only if $e_{\cG}(\bN_\mu^{\bR})=0$.
To study $\bN_\mu^\bR$, we reduce it
to bundles over representation varieties, which we recall
in the next subsection.

\subsection{Representation varieties for Yang-Mills connections}
Let $\Si^\ell_0$ be the closed, compact, connected, orientable surface
with $\ell\geq 0$ handles. Let $\Si^\ell_1$ be the connected sum
of $\Si^\ell_0$ and $\RP^2$, and let $\Si^\ell_2$ be the connected
sum of $\Si^\ell_0$ and a Klein bottle. Any connected, closed,
nonorientable surface
is of the form $\Si^\ell_i$, where $\ell$ is a nonnegative integer and $i=1,2$.
Note that $\Si^\ell_i$ is the connected sum of $(2\ell+i)$-copies of $\RP^2$,
and that the orientable double cover of $\Si^\ell_i$ is $\Si^{\tg}_0$, where
$\tg=2\ell+i-1$.

A Yang-Mills $G$-connection on $\Si$ gives rise to a homomorphism
$\Gamma_\bR(\Si)\to G$ where $\Gamma_\bR(\Si)$ is the {\em super
central extension} introduced in \cite[Section 4.6]{HL1}.
Given $V=(\ab)\in G^{2\ell}$, define
$$
\fm(V)=\pab,\quad \fr(V)=(b_\ell,a_\ell,\ldots, b_1,a_1).
$$
In \cite{HL1}, the authors introduced the following
symmetric representation varieties of Yang-Mills connections
on the orientable double cover $\widetilde{\Si^{\ell}_i}=\Si^{\tg}_0$:
\begin{eqnarray*}
\ZymU{1}_\kn&=&
\bigl\{(V,c,V',c',-2\sqrt{-1}\pi \frac{k}{n} I_n)\mid V,V' \in U(n)^{2\ell},\ c,c'\in U(n),\\
&& \quad \fm(V) =e^{-\pi\sqrt{-1}k/n}I_n c c',\
\fm(V')=e^{\pi \sqrt{-1}k/n}I_n c' c \bigr\},\\
\ZymU{2}_\kn&=&
\bigl\{(V,d,c,V',d',c',-2\sqrt{-1}\pi \frac{k}{n} I_n)\mid V,V'\in U(n)^{2\ell} ,d,c,d', c'\in U(n),\\
&& \quad \fm(V) =e^{-\pi\sqrt{-1}k/n}I_n cd' c^{-1}d,\ \fm(V')=
e^{\pi\sqrt{-1}k/n}I_n c' d (c')^{-1}d' \bigr\}.
\end{eqnarray*}
We also have the following representation variety of
Yang-Mills connections on $\Si^{\tg}_0$:
\begin{eqnarray*}
\ymU{\tg}{0}_\kn&=& \{ (V,-2\sqrt{-1}\pi \frac{k}{n} I_n) \mid V\in
U(n)^{2\tg},\ \fm(V)=e^{-2\pi\sqrt{-1}k/n}I_n \bigr\}\\
&\cong & \cN_{ss}\bigl(P^{n,k}_{\Si^{\tg}_0}\bigr)/\cG_0\bigl(P^{n,k}_{\Si^{\tg}_0}\bigr).
\end{eqnarray*}
Note that $\ymU{0}{0}_\kn$ is empty unless $\frac{k}{n}\in \bZ$, and
$\ymU{0}{0}_{d,\ldots,d}$ consists of a point if $d\in \bZ$.

The surjective maps $\Phi^{\ell,i}:\ZymU{i}_\kn\to \ymU{2\ell+i-1}{0}_\kn$ are given by
\begin{eqnarray*}
\Phi^{\ell,1}(V,c,V',c',-2\sqrt{-1}\pi \frac{k}{n} I_n ) &=& (V, c\fr(V')c^{-1}, -2\sqrt{-1}\pi \frac{k}{n} I_n)\\
\Phi^{\ell,2}(V,d,c,V',d',c',-2\sqrt{-1}\pi \frac{k}{n} I_n ) &=&
(V,d^{-1}c \fr(V') c^{-1}d, d^{-1}, cc',-2\sqrt{-1}\pi \frac{k}{n} I_n)
\end{eqnarray*}

In particular, when $n=1$, $k\in \bZ$, we have
\begin{eqnarray*}
\ZymS{1}_k &=& \bigl\{(V,c,V',(-1)^kc^{-1},-2\sqrt{-1}\pi k)\mid
V,V\in U(1)^{2\ell},\ c\in U(1) \bigr\}\\
&\cong & U(1)^{4\ell+1},\\
\ZymS{2}_k&=& \bigl\{(V,d,c,V',(-1)^k d^{-1},c',-2\sqrt{-1}\pi k)\mid
V,V'\in U(1)^{2\ell},\ d,c,c'\in U(1) \bigr\}\\
&\cong & U(1)^{4\ell+3},\\
\ymS{\tg}{0}_k &=& \{ (V,-2\sqrt{-1}\pi k)\mid V\in U(1)^{2\tg}\} \cong U(1)^{2\tg}.
\end{eqnarray*}
The maps $\Phi^{\ell,i}:\ZymS{i}_k \cong U(1)^{4\ell+2i-1}\to \ymS{2\ell+i-1}{0}_k\cong U(1)^{4\ell+2i-2}$, $i=1,2$,
are given by
\begin{eqnarray*}
\Phi^{\ell,1}(V,c,V',(-1)^kc^{-1},-2\sqrt{-1}\pi k) &=& (V, \fr(V'), -2\sqrt{-1}\pi k)\\
\Phi^{\ell,2}(V,d,c,V',(-1)^k d^{-1},c',-2\sqrt{-1}\pi k ) &=&
(V,\fr(V'), d^{-1}, cc',-2\sqrt{-1}\pi k)
\end{eqnarray*}

\subsection{Vanishing of equivariant Euler class}

Let  $\V_{n_j,k_j} \to \cN_{ss}(P^{n_j,k_j}_\tSi)$
be the real vector bundle whose fiber over
$\D_j \in \cN_{ss}(P^{n_j,k_j})$ is
$H^1(\tSi,\Hom(\D_j,\tau_\cC(\D_j)))^\tau$.
Then  $\V_{n_j,k_j}$ is a $\cG_j$-equivariant real
vector bundle of rank $2n_jk_j + n_j^2(\tg-1)$, where
$\cG_j=\cG(P^{n_j,k_j}_\tSi)$ and  $\tg$ is the
genus of $\tSi$.

For $j=1,\ldots, r$, let
$$
\displaystyle{p_j: \cN_{ss}(P^{n_0,\pm}_\Si)\times
\prod_{i=1}^r \cN_{ss}(P^{n_i,k_i}_\tSi) }
\longrightarrow \cN_{ss}(P^{n_j,k_j}_\tSi)
$$
be the natural projection.
Under the isomorphism of equivariant pairs
$$
\bigl(\cN_\mu, \cG(P^{n,\pm}_\Si) \bigr)\cong
\bigl(\cN_{ss}(P^{n_0,\pm}_\Si),\cG(P^{n_0,\pm}_\Si\bigr) \times
\prod_{j=1}^r \bigl(\cN_{ss}(P^{n_j,k_j}_\tSi),  \cG_j\bigr)
$$
the $\cG$-equivariant vector bundle $\bN_\mu^\bR$ over $\cN_\mu$
is isomorphic to the $\prod_{j=1}^r\cG_j$-equivariant vector bundle
$\bigoplus_{j=1}^{r} p_j^* \V_{n_j,k_j}$ over $ \prod_{j=1}^r
\cN_{ss}(P^{n_j,k_j}_\tSi)$.
In other words, there is a homeomorphism of the total spaces of vector bundles
$$
(\bN_\mu^{\bR})^{h\cG} \cong \bigoplus_{j=1}^{r} p_j^* \V_{n_j,k_j}^{h\cG_j}
$$
which covers the homeomorphism of the bases
$$
\cN_\mu^{h\cG}\cong
\cN_{ss}(P^{n_0,\pm}_\Si)^{h \cG(P^{n_0,\pm}_\Si)} \times
\prod_{j=1}^r \cN_{ss}(P^{n_j,k_j}_\tSi)^{h\cG_j}.
$$
So
$$
e_\cG(\bN_\mu^\bR) =\prod_{j=1}^r e_{\cG_j}(\V_{n_j,k_j}),
$$

The $\cG_j$-equivariant vector bundle $\V_{n_j,k_j}\to \cN_{ss}(P^{n_j,k_j}_\tSi)$
descends to a $U(n_j)$-equivariant vector bundle
$V_{n_j,k_j}$ over $X_{\mathrm{YM}}^{\tg,0}(U(n_j))_{\frac{k_j}{n_j},\ldots,\frac{k_j}{n_j}}$,
and $e_{\cG_j}(\V_{n_j,k_j})$ descends to $e_{U(n_j)}(V_{n_j,k_j})$.

In the remainder of this subsection, we use rational coefficient $\bQ$.

\begin{lm}\label{lemma:ENR}
When $n=1$, $k>0$, the $U(1)$-action on $V_{1,k}$ is trivial, and
$$
e_{U(1)}(V_{1,k})= e(V_{1,k})=0
$$
\end{lm}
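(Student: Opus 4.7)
The plan addresses the two claims separately.

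First, for the triviality of the $U(1)$-action on $V_{1,k}$, I trace through the weights. The central constant gauge $t \in U(1)$ acts on the rank-one bundle $\D$ by multiplication by $t$, hence on $\D^\vee$ by $t^{-1}$, on $\overline{\D^\vee}$ by $\overline{t^{-1}} = t$ (since $|t|=1$), and consequently on $\tau_\cC(\D) = \tau^* \overline{\D^\vee}$ again by $t$. Thus on $\Hom(\D, \tau_\cC(\D)) = \D^\vee \otimes \tau_\cC(\D)$ the weight is $t^{-1}\cdot t = 1$, so the $U(1)$-action is trivial on this line bundle, on its $H^1$, and on the $\tau$-real form $V_{1,k}$. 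The equality $e_{U(1)}(V_{1,k}) = e(V_{1,k})$ is then immediate.

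Second, for $e(V_{1,k}) = 0$, the strategy is to exhibit $V_{1,k}$ as pulled back from a real torus of dimension $\tg$; the rank $2k+\tg-1$ Euler class is then forced to vanish by degree, since $2k+\tg-1 > \tg$ whenever $k \geq 1$. Identify $\ymS{\tg}{0}_k \cong \mathrm{Pic}^k(\tSi)$ via the correspondence between Yang--Mills $U(1)$-connections and polystable holomorphic line bundles. Let $T \subseteq \mathrm{Pic}^0(\tSi)$ be the identity component of the $\tau_\cC$-fixed subgroup, acting freely on $\mathrm{Pic}^k$ by translation. For $N \in T$ and $L \in \mathrm{Pic}^k$, a direct calculation gives
\[
M_{LN} := (LN)^{-1} \otimes \tau_\cC(LN) = M_L \otimes (N^{-1} \otimes \tau_\cC(N)) = M_L
\]
using $\tau_\cC(N) = N$. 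Since the natural $\tau$-structure on $M_L$ depends only on $M_L$ and on $\tau$, the fibers $H^1(\tSi, M_L)^\tau$ of $V_{1,k}$ at $L$ and $LN$ coincide literally, and $V_{1,k}$ descends to an oriented real vector bundle on the quotient torus $\mathrm{Pic}^k/T$.

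To compute $\dim T$, identify $\mathrm{Pic}^0(\tSi) \cong \Hom(\pi_1(\tSi), U(1))$, under which $\tau_\cC$ acts by $L_\rho \mapsto L_{\rho\circ \tau_*}$. The Lie algebra of $T$ is therefore $H^1(\tSi; \bR)^{\tau^*}$; by transfer for the free double cover $\tSi \to \Si$, this equals $H^1(\Si; \bR) \cong \bR^\tg$, since $\Si$ is the connected sum of $\tg+1$ copies of $\RP^2$. Hence $\dim_\bR T = \tg$, making $\mathrm{Pic}^k/T$ a real torus of dimension $\tg$. The Euler class $e(V_{1,k}) \in H^{2k+\tg-1}(\mathrm{Pic}^k; \bQ)$ pulls back from $H^{2k+\tg-1}(\mathrm{Pic}^k/T; \bQ) = 0$, giving $e(V_{1,k}) = 0$.

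The main subtlety to pin down is that $V_{1,k}$ literally pulls back from $\mathrm{Pic}^k/T$ as an oriented real vector bundle, not merely fiberwise; this reduces to the observation, verified above, that the $T$-action on $V_{1,k}$ is through the identity on each fiber.
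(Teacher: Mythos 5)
Your proof rests on the same essential idea as the paper's: exhibit $V_{1,k}$ as pulled back from a $\tg$-dimensional torus, so that the rank $2k+\tg-1 > \tg$ forces the Euler class to vanish by dimension. The route differs in a meaningful way, though. The paper writes down an explicit map $\phi_X: X_{\mathrm{YM}}^{\tg,0}(U(1))_k \to X_{\mathrm{YM}}^{\tg,0}(U(1))_{-2k}$ induced by $\LL\mapsto \LL^\vee\otimes\tau_\cC(\LL)$, an explicit involution $\hat{\tau}_X$ on the target, reads off from coordinates that $\mathrm{Im}\,\phi_X$ is the fixed torus $\cong U(1)^{\tg}$, and produces $V_{1,k}=\phi_X^* F_k^{\hat{\tau}_X}$ directly. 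You instead quotient by the identity component $T$ of the $\tau_\cC$-fixed subgroup of $\mathrm{Pic}^0(\tSi)$ and compute $\dim T=\tg$ by transfer for the free double cover, which is a clean, coordinate-free alternative to the paper's explicit formulas; it buys conceptual transparency (the $\tg$ comes from $b_1(\Si)$) at the cost of being less hands-on about the bundle. Your $T$-quotient is exactly the fibration underlying $\phi_X$ (the degree-zero part of $\phi_X$ is $N\mapsto N^{-1}\otimes\tau_\cC(N)$, whose kernel has identity component $T$), so the two pictures are consistent.

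The one step you compress — and you are right to flag it — is the descent of $V_{1,k}$ to $\mathrm{Pic}^k/T$. The observation that $M_{LN}=M_L$ and hence that the fibers $H^1(\tSi,M_L)^\tau$ "coincide" gives you a candidate fiberwise identification, but descent of the bundle requires a continuous $T$-equivariant structure on the total space of $V_{1,k}$, i.e., a coherent identification across the universal family, not merely an isomorphism of each fiber with the next. This is precisely what the paper's $\phi_X$-pullback $V_{1,k}=\phi_X^*F_k^{\hat{\tau}_X}$ supplies for free: once $V_{1,k}$ is realized as a pullback under a map that factors through the quotient, the descent is automatic. So to close your argument you should either point to the $\phi_X$-pullback structure directly, or construct the $T$-equivariant structure from a universal (Poincar\'e-type) bundle on $\mathrm{Pic}^k\times\tSi$ and verify it descends to $(\mathrm{Pic}^k/T)\times\tSi$. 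With that step made precise, your argument is correct and gives a slightly more conceptual account of where the number $\tg$ comes from than the paper's coordinate computation.
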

\begin{proof}
The $U(1)$-action is similar to that in Lemma \ref{lemma:ENC}.

We first review some discussion in \cite[Section 6.2]{HLR}.
Given $c\in U(1)$, let $\bc=c^{-1}$ denote the complex conjugate.
Given $V=(\ab)\in U(1)^{2\ell}$, and $V'=(a'_1,b'_1,\ldots, a'_\ell,b'_\ell)$, let
$$
\bV=(\ba_1,\bb_1,\ldots, \ba_\ell,\bb_\ell),\quad
VV'=(a_1 a'_1,b_1b'_1,\ldots, a_\ell a'_\ell, b_\ell b'_\ell).
$$
The map $\LL\mapsto \Hom(\LL,\tau_\cC(\LL)) =\LL^\vee\otimes \tau^*\overline{\LL^\vee}$,
where $\LL$ is a degree $k>0$ holomorphic line bundle over
$\tSi$, induces a map  $\phi_Z:\ZymS{i}_k\longrightarrow \ZymS{i}_{-2k}$ given by
\begin{eqnarray*}
&& (V,c,V',(-1)^k \bc,-2\sqrt{-1}\pi k)\\
&\mapsto& ( \bV V',(-1)^k
\bc^2,\bV' V,(-1)^k c^2,4\sqrt{-1} \pi k),\quad i=1,\\
&& (V,d,c, V',(-1)^k \bd,c',-2\sqrt{-1}\pi k) \\
&\mapsto& (\bV V',(-1)^k \bd^2, \bc c', \bV' V,(-1)^k d^2, \bc'
c,4\sqrt{-1} \pi k),\quad i=2.
\end{eqnarray*}
It descends to a map $\phi_X:\ymS{2\ell+i-1}{0}_k\longrightarrow \ymS{2\ell+i-1}{0}_{-2k}$ given by
\begin{eqnarray*}
(V_1, V_2, -2\sqrt{-1}k) &\mapsto & (\fr(V_2) \bV_1, \fr(V_1) \bV_2, 4\sqrt{-1}\pi k),\quad i=1\\
(V_1, V_2,d,c,-2\sqrt{-1}k) &\mapsto & (\fr(V_2) \bV_1, \fr(V_1)\bV_2, (-1)^k \bd^2,1, 4\sqrt{-1}\pi k ),\quad i=2.
\end{eqnarray*}

The map $\cM\mapsto \tau^*\overline{\cM}$, where $\cM$
is a degree $-2k$ holomorphic line bundle over $\tSi$, induces
an involution $\hat{\tau}_Z:\ZymS{i}_{-2k}\to \ZymS{i}_{-2k}$ given by
\begin{eqnarray*}
&& (V,c,V',\bc, 4\sqrt{-1}\pi k)
\mapsto ( \bV', c, \bV , \bc , 4\sqrt{-1}\pi k),\quad i=1,\\
&&(V,d, c,V', \bd , c', 4\sqrt{-1}\pi k) \mapsto ( \bV' ,d, \bc' ,
\bV , \bd , \bc, 4\sqrt{-1}\pi k),\quad i=2.
\end{eqnarray*}
It descends to an involution $\hat{\tau}_X:\ymS{2\ell+i-1}{0}_{-2k} \longrightarrow  \ymS{2\ell+i-1}{0}_{-2k}$
given by
\begin{eqnarray*}
(V_1, V_2,4\sqrt{-1}\pi k) &\mapsto & (\fr(\bV_2), \fr(\bV_1), 4\sqrt{-1}\pi k),\quad i=1\\
(V_1, V_2, d, c,4\sqrt{-1}\pi k) &\mapsto & (\fr(\bV_2), \fr(\bV_1), d,\bc, 4\sqrt{-1}\pi k),\quad i=2.
\end{eqnarray*}
We have
$$
\mathrm{Im}\,\phi_Z= \ZymS{i}_{-2k}^{\hat{\tau}_Z}\cong U(1)^{2\ell+i},\quad
\mathrm{Im}\, \phi_X= \ymS{2\ell+i-1}{0}_{-2k}^{\hat{\tau}_X}\cong U(1)^{2\ell+i-1}.
$$

Let $U_k\to \ZymS{i}_{-2k}$  and $F_k\to \ymS{2\ell+i-1}{0}_{-2k}$
be the vector bundles
whose fiber over $\cM$ is $H^1(\tSi,\cM)$. Then
the involution $\hat{\tau}_Z$ (resp. $\hat{\tau}_X$) lifts to an involution on $U_k$
(resp. $F_k$):
$$
(U_k)_\cM = (F_k)_\cM= H^1(\tSi,\cM) \longrightarrow
(U_k)_{\tau^*\overline{\cM}}=(F_k)_{\tau^*\overline{\cM}}= H^1(\tSi,\tau^*\overline{\cM}).
$$
The fixed locus $U_k^{\hat{\tau}_Z}$ (resp. $F_k^{\hat{\tau}_X}$) is a real vector bundle over
$\ZymS{i}_{-2k}^{\hat{\tau}_Z}$ (resp. $\ymS{2\ell+i-1}{0}_{-2k}^{\hat{\tau}_X}$).
Let $W_k\to \ZymS{i}_k$ be the vector bundle whose fiber over $\LL$ is
$H^1(\tSi,\Hom(\LL,\tau_\cC(\LL))^\tau$. Then
$$
\phi_Z^* U_k^{\hat{\tau}_Z}=W_k,\quad
 \phi_X^* F_k^{\hat{\tau}_X} = V_{1,k},\quad \rank_\bR V_{1,k}= \rank_\bR F_k^{\hat{\tau}_X}=
\rank_\bC F_k=2k + 2\ell+i-2.
$$

The $U(1)$-action on $\Hom(\LL, \tau_\cC(\LL))$ is given by $t \cdot t^{-1}$,
and thus the weights of the $U(1)$-action on $(V_{1,k})_\LL=H^1(\tSi,
\Hom(\LL, \tau_\cC(\LL)))^\tau$ are also given by $t\cdot t^{-1}$ which is
trivial. So $e_{U(1)}(V_{1,k})=  e(V_{1,k})$.

We have
$$
\rank_\bR F_k^{\hat{\tau}_X} =  2k+ 2\ell + i -2 > 2\ell+i -1 =
\dim_\bR \ymS{2\ell+i-1}{0}_{-2k}^{\hat{\tau}_X}
$$
since $k>0$. So $e(F_k^{\hat{\tau}_X})=0$.  Therefore
$$
e(V_{1,k})= \phi_X^* e(F_k^{\hat{\tau}_X})=0.
$$
\end{proof}

\begin{proof}[Proof of Theorem \ref{thm:vanishing}]
Part (ii) follows from Lemma \ref{lemma:ENR}. For
part (i), recall
that $\ymU{0}{0}_\kn$ is empty unless
$\frac{k}{n}\in \bZ$, and $\ymU{0}{0}_{d,\cdots,d}$ consists of
a point if $d\in \bZ$. We need to prove that, for any positive integers  $n,d>0$,
$$
e_{U(n)}(V_{n,nd}) \in H^*_{U(n)}(
\ymU{0}{0}_{d,\ldots,d};\bQ)
$$
is zero. Since $Y_{n,d}:=\ymU{0}{0}_{d,\ldots,d}$ is a point,
the inclusion of the maximal torus
$T=U(1)^n\subset U(n)$ induces an injective
ring homomorphism
$$
\beta: H^*_{U(n)}(Y_{n,d};\bQ)\cong \bQ[u_1,\ldots,u_n]^{S_n}
\to H^*_T(Y_{n,d};\bQ)\cong \bQ[u_1,\ldots,u_n].
$$
So it suffices to show that
$e_T(V_{n,nd}) = \beta\left(e_{U(n)}(V_{n,nd}) \right)$ is zero.
We have
\begin{eqnarray*}
V_{n,nd} &=& H^1\Bigl(\bP^1,\Hom( \bigoplus_{i=1}^n \LL_i,
\bigoplus_{j=1}^n \tau_\cC(\LL_j) \Bigr)^\tau \\
&\cong& \bigoplus_{i<j} H^1(\bP^1, \LL_i^\vee\otimes \tau_\cC(\LL_j))
\oplus \bigoplus_{i=1}^n H^1(\bP^1, \LL_i^\vee\otimes \tau_\cC(\LL_i))^{\tau}
\end{eqnarray*}
where $\LL_i=\cO_{\bP^1}(d)$ for $i=1,\ldots,n$
and $\tau_\cC (\LL_j) = \cO_{\bP^1}(-d)$ for
$j=1,\ldots, n$.  The weights of $T$-action on
$H^1\bigl(\bP^1,\LL_i^\vee\otimes \tau_\cC(\LL_j)\bigr)$ is
$t_j t_i^{-1}$, where
$(t_1,\ldots,t_n)\in U(1)^n=T$.
Let
$$
V_\bC = \bigoplus_{i<j} H^1(\bP^1, \LL_i^{-1}\otimes \tau_\cC(\LL_j)),\quad
V_\bR = \bigoplus_{i=1}^n H^1(\bP^1, \LL_i^{-1}\otimes \tau_\cC(\LL_i))^{\tau}.
$$
Then
$$
V_{n,nd} = V_\bC\oplus V_\bR
$$
where $V_\bC$ is a complex vector space, $V_\bR$ is a real
vector space on which $T$-acts trivially, and
$$
\dim_\bR V_{n,nd}= n^2(2d-1),\quad
\dim_\bC V_\bC = \frac{n(n-1)}{2}(2d-1),\quad
\dim_\bR V_\bR = n(2d-1).
$$
We have
$$
e_T(V_{n,nd})=e_T(V_\bC) e_T(V_\bR),
$$
where
$$
e_T(V_\bC)= \pm \prod_{i<j} (u_i-u_j)^{2d-1},\quad
e_T(V_\bR)=0,
$$since $\rank_\bR V_\bR=\dim_\bR V_\bR>0=\dim_\bR Y_{n,d}$.
Therefore $e_T(V_{n,nd})=0$.
\end{proof}

\section{Equivariant Poincar\'{e} Series}\label{sec:poincare}

By P5 of Lemma \ref{lm:perfect}, the stratification is equivariantly $\bQ$-perfect if and only if
\begin{equation}\label{eqn:Pfive}
P_t^{\cG}(\cA_{ss};\bQ)=P_t^\cG(\cA;\bQ) -\sum_{\mu\in \Lambda'} t^{\lambda_\mu}P_t^{\cG}(\cA_\mu;\bQ)
\end{equation}

By A5 of Lemma \ref{lm:anti-perfect},
the stratification is equivariantly $\bQ$-antiperfect if and only if
\begin{equation}\label{eqn:Afive}
P_t^{\cG}(\cA_{ss};\bQ)=P_t^\cG(\cA;\bQ) +\sum_{\mu\in \Lambda'} t^{\lambda_\mu-1} P_t^{\cG}(\cA_\mu;\bQ).
\end{equation}

\subsection{Representation varieties for flat connections}
A flat $G$-connection on $\Si$ gives rise to a homomorphism
$\pi_1(\Si)\to G$. Recall that
\begin{eqnarray*}
\pi_1(\Si^\ell_1)
&=&\langle A_1,B_1,\ldots,A_\ell, B_\ell, C \mid \prod_{i=1}^\ell [A_i,B_i]=C^2 \rangle,\\
\pi_1(\Si^\ell_2)
&=&\langle A_1,B_1,\ldots,A_\ell, B_\ell, D,C \mid \prod_{i=1}^\ell [A_i,B_i]=C D C^{-1} D \rangle.
\end{eqnarray*}

Representation varieties of flat $U(n)$-connections and $SU(n)$-connections on $\Si^\ell_1$ and $\Si^\ell_2$ are given by
\begin{eqnarray*}
\flU{\ell}{1}&=&\{(V,c) \mid V\in U(n)^{2\ell}, c\in U(n), \fm(V)=c^2 \} \\
\flU{\ell}{1}_{\pm 1} &=& \{ (V,c)\in \flU{\ell}{1} \mid \det c =\pm 1\}\\
X_{\mathrm{flat}}^{\ell,1}(SU(n))&=& \{(V,c) \mid V\in SU(n)^{2\ell}, c\in SU(n), \fm(V)=c^2 \} \\
\flU{\ell}{2}&=&\{(V,d,c) \mid V\in U(n)^{2\ell},
d,c\in  U(n), \fm(V)= c d c^{-1} d \}\\
\flU{\ell}{2}_{\pm 1} &=& \{ (V,d,c)\in \flU{\ell}{1} \mid \det d =\pm 1\}\\
X_{\mathrm{flat}}^{\ell,2}(SU(n))&=& \{(V,d,c) \mid V\in SU(n)^{2\ell}, d,c\in SU(n), \fm(V)=c d c^{-1} d \}
\end{eqnarray*}
For $i=1, 2$,
$$
\Hom(\pi_1(\Si^{\ell}_i), U(n))_{\pm 1} = \flU{\ell}{i}_{\pm 1},\quad
\Hom(\pi_1(\Si^{\ell}_i), SU(n)) = X_{\mathrm{flat}}^{\ell,i}(SU(n)).
$$

\subsection{Rank 2 case}\label{sec:Utwo}

\begin{proof}[Proof of Theorem \ref{thm:Utwo}]
There are two possible principal $U(2)$-bundles $P^{2,+}_{\Si^\ell_i},
~P^{2,-}_{\Si^\ell_i}$ over the nonorientable surface $\Si^\ell_i$.
In  notation in Section \ref{sec:ABtypes},
\begin{eqnarray*}
&&I_2^0=\{(0,0)\}\\
&&I_2^+(\Si^\ell_1)= I_2^-(\Si^\ell_2) = \{(2r-1,1-2r)\mid r\in \bZ_{>0}\},\\
&& I_2^-(\Si^\ell_1)=I_2^+(\Si^\ell_2) = \{ (2r,-2r)\mid r\in \bZ_{>0}\}.
\end{eqnarray*}
So when $\cA=\cA(P^{2,\pm}_\Si)$, $\Lambda'= I_2^\pm(\Si)$.

Let $\tg=2\ell+i-1$ be the genus of the oriented double cover of
$\Si^\ell_i$. From \cite[Example 7.5]{HL1}, The codimension of each stratum is
$$
d_{r,-r}= 2r+\tg-1,
$$
and the equivariant Poincar\'{e} series for stratum $\mu=(r,-r)$ is
\begin{eqnarray*}
P^\cG_t\left(\cA(\Si^\ell_i)_{r,-r};\bQ\right) &=&
P^{U(2)}_t\left(X^{\ell,i}_{\mathrm{YM}}(U(2))_{r,-r};\bQ\right)
= P^{U(1)}_t\left(X^{\tg,0}_{\mathrm{YM}}(U(1))_{r};\bQ\right)\\
&=&P^{U(1)}_t(U(1)^{2\tg})= \frac{(1+t)^{2\tg} }{1-t^2}.
\end{eqnarray*}

By \cite[Theorem 2.5]{HL2},
$$
P_t^{\cG}(\cA;\bQ)= P_t(B\cG;\bQ)=\frac{(1+t)^{\tg}(1+t^3)^{\tg}}{(1-t^2)(1-t^4)}.
$$

We have
$$
\sum_{r\textup{ odd}} t^{d_{r,-r}-1} = \frac{t^{\tg}}{1-t^4},\quad
\sum_{r\textup{ even} } t^{d_{r,-r}-1 }=\frac{t^{\tg+2}}{1-t^4}.
$$
Therefore \eqref{eqn:Afive} is equivalent to the following
identities
\begin{eqnarray*}
P_t^{U(2)}\left(X^{\ell,i}_{\mathrm{flat}}(U(2))_{(-1)^i};\bQ\right)
&=& P_t(B\cG;\bQ) +\sum_{r\textup{ even} } t^{d_{r,-r}-1 }
P^\cG_t\left(\cA(\Si^\ell_i)_{r,-r};\bQ\right) \\
&=& \frac{(1+t)^{\tg}}{(1-t^2)(1-t^4)} ( (1+t^3)^{\tg} + t^{\tg+2}(1+t)^{\tg}),\\
P_t^{U(2)}\left(X^{\ell,i}_{\mathrm{flat}}(U(2))_{(-1)^{i+1}};\bQ\right)
&=& P_t(B\cG;\bQ) +\sum_{r\textup{ odd} } t^{d_{r,-r}-1 }
P^\cG_t\left(\cA(\Si^\ell_i)_{r,-r};\bQ\right) \\
&=& \frac{(1+t)^{\tg}}{(1-t^2)(1-t^4)} ( (1+t^3)^{\tg} +
t^{\tg}(1+t)^{\tg}).
\end{eqnarray*}

We now consider the principal $SU(2)$-bundles $Q^{2}_{\Si^\ell_i} \cong \Si^\ell_i \times SU(2)$ over the nonorientable surface $\Si^\ell_i$ together with the gauge group $\cG'=\Aut(Q^{2}_{\Si^\ell_i})$ action.
The set of Atiyah-Bott types is
$I_2^0 \cup I_2^+(\Si^\ell_i)$, so
$$
\Lambda'=\{(r,-r)\mid r \in \bZ_{>0},\  r = i \textup{ (mod 2) }\}.
$$
The codimension of $\cA'_{r,-r}$ in $\cA(Q^2_{\Si^\ell_i})$ is the same
as the codimension of $\cA_{r,-r}$ in $\cA(P^{2,+}_{\Si^\ell_i})$, which is
$d_{r,-r}= 2r+\tg-1$.

We now  derive the reduction formula for each stratum $\mu=(r,-r)$, $r>0$.
The corresponding representation varieties are
\begin{eqnarray*}
X^{\ell,1}_{\mathrm{YM}}(SU(2))_\mu &=&\{(V,c,X)\in SU(2)^{2\ell+1}\times C_{\mu/2}|\
V\in (SU(2)_X)^{2\ell},\\&&\quad \Ad(c)X=-X, \fm(V)=\exp(X)c^2\},\\
X^{\ell,2}_{\mathrm{YM}}(SU(2))_\mu &=&\{(V,d,c,X)\in SU(2)^{2\ell+2}\times C_{\mu/2}|\
(V,d)\in (SU(2)_X)^{2\ell+1},\\&&\quad  \Ad(c)(X)=-X,\ \fm(V)=\exp(X) cd c^{-1} d\}.
\end{eqnarray*}
where $C_{\mu/2}$ is the orbit of $X_\mu/2=-\pi\sqrt{-1}\diag(r,-r) \in \fsu(2)$ under the
Adjoint action of $SU(2)$ on $\fsu(2)$.  Let
$$
 \ep =\left( \begin{array}{cc} 0 & 1 \\  -1 & 0 \end{array}\right).
$$
Then $\Ad(\ep)(X_\mu)= -X_\mu$. Note that
$$
SU(2)_{X_\mu}=\{ \diag(u,u^{-1})\mid u\in U(1)\} \cong U(1),\quad \exp(X_\mu/2) = (-1)^r I_2.
$$
For $\mu\in \Lambda'=\{ (r,-r)\mid r>0, r = i \textup{ (mod 2) } \}$, define $V^{\ell,i}(SU(2))_\mu$ as follows:
\begin{eqnarray*}
V^{\ell,1}(SU(2))_\mu &=&\{(V,c)\in SU(2)^{2\ell+1}|\ V\in (SU(2)_{X_\mu})^{2\ell},\\
&&  \quad \Ad(c)X_\mu=-X_\mu, c^2 = -I_2 \},\\
& \stackrel{c=c'\ep}{\cong} &\{(V,c')\mid V\in (SU(2)_{X_\mu})^{2\ell}, c'\in SU(2)_{X_\mu} \} \cong U(1)^{2\ell+1}\\
V^{\ell,2}(SU(2))_\mu&=&\{(V,d,c)\in SU(2)^{2\ell+2}\mid (V,d)\in (SU(2)_{X_\mu})^{2\ell+1},\\
&& \quad  \Ad(c)(X_\mu)=-X_\mu, cd c^{-1} d = I_2\}\\
&\stackrel{c=c'\ep} {\cong} & \{ (V,d,c')\mid V\in (SU(2)_{X_\mu})^{2\ell}, d,c \in SU(2)_{X_\mu} \} \cong U(1)^{2\ell+2}
\end{eqnarray*}

By argument similar to that in \cite[Section 7]{HL1}, the following equivariant pairs are equivalent
 $$
 (X^{\ell,i}_{\mathrm{YM}}(SU(2))_\mu, SU(2))\cong
 (V^{\ell,i}(SU(2))_\mu, SU(2)_{X_\mu}) \cong
 (U(1)^{2\ell+i}, U(1))
 $$
 where $U(1)$ acts on $U(1)^{2\ell} \times U(1)^i$ by
 $$
 u\cdot (V, c) = (V, u^2 c),\quad
 u\cdot (V, d,c) = (V, d, u^2 c)
 $$

Thus, the $\cG'$-equivariant Poincar\'{e} series for stratum
$\cA'_{r,-r}$ is
$$
P^{\cG'}_t\left(\cA'_{r,-r};\bQ\right) =
P^{SU(2)}_t\left(X^{\ell,i}_{\mathrm{YM}}(SU(2))_{r,-r};\bQ\right) =
P_t(U(1)^{\tg};\bQ)=(1+t)^{\tg},\quad \tg=2\ell+i-1.
$$
By \cite[Theorem 2.5]{HL2},
$$P_t^{\cG'}(\cA(Q^2_{\Si^\ell_i});\bQ)=P_t(B\cG';\bQ)=\frac{(1+t^3)^{\tg}}{1-t^4}.$$

Therefore \eqref{eqn:Afive} is equivalent to
the following identities
\begin{eqnarray*}
P_t^{SU(2)}(X^{\ell,1}_{\mathrm{flat}}(SU(2));\bQ)&=&P_t(B\cG';\bQ)
+\sum_{r\textup{ odd}} t^{d_{r,-r}-1} (1+t)^{\tg}
=\frac{(1+t^3)^{\tg} + t^{\tg}(1+t)^{\tg}}{1-t^4}\\
P_t^{SU(2)}(X^{\ell,2}_{\mathrm{flat}}(SU(2));\bQ)&=&P_t(B\cG';\bQ)
+\sum_{r\textup{ even}} t^{d_{r,-r}-1} (1+t)^{\tg}
= \frac{(1+t^3)^{\tg} + t^{\tg+2}(1+t)^{\tg}}{1-t^4}.
\end{eqnarray*}
\end{proof}

\subsection{Rank 3 case}\label{sec:Uthree}
\begin{proof}[Proof of Theorem \ref{thm:Uthree}]
There are two possible principal $U(3)$-bundles $P^{3,+}_{\Si^\ell_i},
~P^{3,-}_{\Si^\ell_i}$ over the nonorientable surface $\Si^\ell_i$.
In the notation of Section \ref{sec:ABtypes},
$$
I_3=I_3^0 =\{(0,0,0)\} \cup \{(r,0,-r)\mid r\in \bZ_{>0}\}
$$
So when $\cA=\cA(P^{3,\pm}_\Si)$, $\Lambda'=\{(r,0,-r)\}
\mid r\in \bZ_{>0}\}$.

Let $\tg=2\ell+i-1$ be the genus of the oriented double cover of
$\Si^\ell_i$. From \cite[Example
7.6]{HL1}, the codimension of each stratum is
$$
d_{r,0,-r}= 4r+3(\tg-1),
$$
and the equivariant Poincar\'{e} series for stratum $\mu=(r,0,-r)$
is
$$
P^\cG_t\left(\cA(\Si^\ell_i)_{r,0,-r};\bQ\right) =
P^{U(3)}_t\left(X^{\ell,i}_{\mathrm{YM}}(U(3))_{r,0,-r};\bQ\right) =
P^{U(1)\times U(1)}_t\left(U(1)^{3\tg}\right) =\frac{(1+t)^{3\tg}
}{(1-t^2)^2}.
$$

By \cite[Theorem 2.5]{HL2},
$$
P_t^\cG(\cA;\bQ)=P_t(B\cG;\bQ)=\frac{(1+t)^{\tg}(1+t^3)^{\tg}(1+t^5)^{\tg}}{(1-t^2)(1-t^4)(1-t^6)}.
$$

Therefore \eqref{eqn:Afive} is equivalent to
the following identity
\begin{eqnarray*}
\lefteqn{ P_t^{U(3)}\left(X^{\ell,i}_{\mathrm{flat}}(U(3))_{\pm
1};\bQ\right)=
P_t(B\cG;\bQ) +\sum_{r>0} t^{d_{r,0,-r}-1} P^\cG_t\left(\cA(\Si^\ell_i)_{r,0,-r};\bQ\right) }\\
&&= \frac{(1+t)^{\tg}}{(1-t^2)(1-t^4)(1-t^6)} (
(1+t^3)^{\tg}(1+t^5)^{\tg} + t^{3\tg}(1+t)^{2\tg} (1+t^2+t^4)).
\end{eqnarray*}

We now consider the principal $SU(3)$-bundles $Q^{3}_{\Si^\ell_i} \cong \Si^\ell_i \times SU(3)$ over the nonorientable surface $\Si^\ell_i$ together with the gauge group $\cG'=\Aut(Q^{3}_{\Si^\ell_i})$ action.
The set of Atiyah-Bott types is
$I_3^0$, so  $\Lambda'=\{(r,0,-r)\mid r\in \bZ_{>0}\}$.
The codimension of $\cA'_{r,0,-r}$ in $\cA(Q^3_{\Si^\ell_i})$ is the same
as the codimension of $\cA_{r,0,-r}$ in $\cA(P^{3,+}_{\Si^\ell_i})$, which is
$d_{r,0,-r}= 4r+3(\tg-1)$.

We now  derive the reduction formula for each stratum $\mu=(r,0,-r)$.
The corresponding representation varieties are
\begin{eqnarray*}
X^{\ell,1}_{\mathrm{YM}}(SU(3))_{r,0,-r}&=&\{(V,c,X)\in SU(3)^{2\ell+1}\times C_{\mu/2}|\
V\in (SU(3)_X)^{2\ell},\\&&\quad Ad(c)X=-X, \fm(V)=\exp(X)c^2\},\\
X^{\ell,2}_{\mathrm{YM}}(SU(3))_{r,0,-r}&=&\{(V,d,c,X)\in SU(3)^{2\ell+2}\times C_{\mu/2}|\
(V,d)\in (SU(3)_X)^{2\ell+1},\\&&\quad  \Ad(c)(X)=-X,\ \fm(V)=\exp(X) cd c^{-1} d\}.
\end{eqnarray*}
where $C_{\mu/2}$ is the orbit of $X_\mu/2=-\pi\sqrt{-1}\diag(r,0,-r) \in \fsu(3)$
under the Adjoint action of $SU(3)$ on $\fsu(3)$. Let
$$
\ep=\left(\begin{array}{ccc}
0&0 &  1 \\
0&-1 & 0\\
1&0 & 0
 \end{array}\right)\in SU(3).
$$
Then $\Ad(\ep)X_\mu =-X_\mu$. Note that
$$
SU(3)_{X_\mu}= \{ \diag(u_1, u_2, u_3)\mid u_1, u_2, u_3 \in U(1), u_1 u_2 u_3 =1\} \cong U(1)\times U(1),
$$
$$
\exp(X_\mu/2)=\diag( (-1)^r, 1, (-1)^r).
$$

Given $\mu\in \Lambda'=\{ (r,0,-r)\mid r\in \bZ_{>0}\}$, define $V^{\ell,i}(SU(3))_\mu$ as follows:
\begin{eqnarray*}
V^{\ell,1}(SU(3))_\mu&=&\{(V,c')\in (SU(3)_{X_\mu})^{2\ell+1}|\ \fm(V)=\exp(X_\mu/2)(\epsilon c')^2\}\\
V^{\ell,2}(SU(3))_\mu&=&\{(V,d,c')\in (SU(3)_{X_\mu})^{2\ell+2}|\ \fm(V) =\exp(X_\mu/2) \epsilon c'd (\epsilon  c')^{-1}d\}.
\end{eqnarray*}
By argument similar to that in \cite[Section 7]{HL1}, the following equivariant pairs are equivalent:
\begin{eqnarray*}
&& (X^{\ell,i}_{\mathrm{YM}}(SU(3))_\mu, SU(3))\cong
(V^{\ell,i}(SU(3))_\mu, SU(3)_{X_\mu}) \\
&& \cong (Z^{\ell,i}_{\mathrm{YM}}(U(1))_r, U(1)\times U(1))
\cong (X^{\tg,0}_{\mathrm{YM}}(U(1))_r, U(1))
\end{eqnarray*}
where $Z^{\ell,i}_{\mathrm{YM}}(U(1))$ is the
symmetric representation varieties defined in \cite[Section 4.4]{HL1}.
Thus, the $\cG'$-equivariant Poincar\'{e} series for stratum
$\cA'_{r,0,-r}$ is
$$
P^{\cG'}_t\left(\cA'_{r,0,-r};\bQ\right) =
P^{SU(3)}_t\left(X^{\ell,i}_{\mathrm{YM}}(SU(3))_{r,0,-r};\bQ\right) =
P_t^{U(1)}(X^{\tg,0}_{\mathrm{YM}}(U(1))_r)=\frac{(1+t)^{2\tg}}{1-t^2}.
$$

By \cite[Theorem 2.5]{HL2},
$$P_t^{\cG'}(\cA(Q^3_{\Si^\ell_i});\bQ)=P_t(B\cG';\bQ)=\frac{(1+t^3)^{\tg}(1+t^5)^{\tg}}{(1-t^4)(1-t^6)}.$$
Therefore \eqref{eqn:Afive} is equivalent to
the following identity
\begin{eqnarray*}
P_t^{SU(3)}(X^{\ell,i}_{\mathrm{flat}}(SU(3));\bQ)&=&P_t(B\cG';\bQ)
+\sum_{r>0}t^{4r+3(\tg-1)-1}\frac{(1+t)^{2\tg}}{1-t^2}\\&
=&\frac{(1+t^3)^{\tg}(1+t^5)^{\tg}}{(1-t^4)(1-t^6)}+\frac{(1+t)^{2\tg}t^{3\tg}}{(1-t^2)(1-t^4)}
\end{eqnarray*}
\end{proof}

\end{document}